\newtheorem{thm}{Theorem}[section]
\newtheorem{cor}[thm]{Corollary}
\newtheorem{lem}[thm]{Lemma}
\newtheorem{pro}[thm]{Proposition}
\newtheorem{obs}[thm]{Observation}
\newenvironment{ack}{\noindent{\bf Acknowledgments}}
\newcommand{\vol}{{\rm vol}}
\newcommand{\cs}{{\rm cs}}
\newcommand{\li}{{\rm Li}_2}
\newcommand{\olim}{\underset{N\rightarrow\infty}{\text{\rm o-lim}}}
\newcommand{\modulo}{~~({\rm mod}~\pi^2)}
\newcommand{\modulos}{~~({\rm mod}~4\pi^2)}
\begin{document}

\title{Optimistic limits of Kashaev invariants and complex volumes of hyperbolic links
\footnote{2000 Mathematics Subject Classification: Primary 57M27; Secondly 51M25, 58J28.}
}
\author{\sc Jinseok Cho, Hyuk Kim and Seonhwa Kim}
\maketitle
\begin{abstract}
Yokota suggested an optimistic limit method of the Kashaev invariants of hyperbolic knots
and showed it determines the complex volumes of the knots. His method is very effective and
gives almost combinatorial method of calculating the complex volumes.
However, to describe the triangulation of the knot complement, 
he restricted his method to knot diagrams with certain conditions.
Although these restrictions are general enough for any hyperbolic knots, 
we have to select a good diagram of the knot to apply his theory.

In this article, we suggest more combinatorial way to calculate the complex volumes of hyperbolic links
using the modified optimistic limit method.
This new method works for any link diagrams, and it is more intuitive, easy to handle and has natural geometric meaning.
\end{abstract}

\section{Introduction}\label{sec1}

Kashaev conjectured the following relation in \cite{Kashaev97} :
  $$2\pi \lim_{N\rightarrow\infty}\frac{\log|\langle L\rangle_N|}{N}=\vol(L),$$
  where $L$ is a hyperbolic link, vol($L$) is the hyperbolic volume of $\mathbb{S}^3\backslash L$,
  and $\langle L\rangle_N$ is the $N$-th Kashaev invariant of $L$.
  After that, the generalized conjecture was proposed in \cite{Murakami02} that
 $$2\pi i\lim_{N\rightarrow\infty}\frac{\log\langle L\rangle_N}{N}\equiv i(\vol(L)+i\,\cs(L))\modulo,$$
 where cs($L$) is the Chern-Simons invariant of $\mathbb{S}^3\backslash L$ defined modulo $\pi^2$ in \cite{Meyerhoff86}.
These are now called {\it Kashaev volume conjectures} and $\vol(L)+i\,\cs(L)$ is called {\it the complex volume} of $L$.

When Kashaev suggested the conjecture in \cite{Kashaev97}, he calculated a certain value using
an analytic function induced from the Kashaev invariant
and showed numerically that the value coincides with the volume of the link for  a few cases,  
 where the function was the same  one called {\it the potential function} in \cite{Neumann95}.  
After that,  the value was named {\it the optimistic limit} of the Kashaev invariant and denoted by $2\pi i\,\olim\frac{\log\langle L\rangle_N}{N}$ in \cite{Murakami00b}.
(See Section 1.1 of \cite{Cho13b} for the exact meaning of the optimistic limit.)

 The potential function is closely related to the quantum dilogarithm function of Faddeev as in \cite{Kashaev97},
especially, it can be considered as a classical limit of the partition function defined in \cite{Hikami07},
where the partition function is defined by assigning the quantum dilogarithms to each hyperbolic ideal tetrahedra
and integrating the product of them. Quantum dilogarithm satisfies the 3--2 Pachner move, so the partition function
is expected to be independent of the chosen triangulation. Similar ideas were used in \cite{Baseilhac07} and \cite{Kashaev14}
to define quantum (hyperbolic) field theories and certain invariants of knots incompact oriented 3-manifolds using partition functions, which suggests
the potential functions can be used, not only in the optimistic limits, but also in many other situations.

 Regarding the optimistic limit, Yokota proved
$$2\pi i\,\olim\frac{\log\langle K\rangle_N}{N} \equiv i(\vol(K)+i\,\cs(K))\modulo,$$
for hyperbolic knots $K$ in \cite{Yokota10} by introducing natural geometry corresponding to the optimistic limit.
Elaborating on the geometry, he defined a triangulation of $\mathbb{S}^3\backslash (K\cup\{\text{two points}\})$ 
and transformed it into the triangulation of $\mathbb{S}^3\backslash K$ by collapsing certain tetrahedra. 
He defined the potential function reflecting this collapsing process and proved the derivation of this function
gives the hyperbolicity equations, i.e. Thurston's gluing equations and the completeness conditions of the triangulation.
(See Section \ref{sec3} for the definitions.)
His method is very effective and
gives almost combinatorial method of calculating the complex volumes of hyperbolic knots.
(See \cite{Cho10b} for a brief survey.)

However, understanding Yokota's method in \cite{Yokota10} is not easy for several reasons. We think
a major difficulty lies on the collapsing process of the triangulation.
To make the collapsing works well, he deformed the knot diagram into certain (1,1)-tangle diagram 
satisfying several non-trivial conditions and restricted his method only to knots.
Furthermore, the collapsing process twists the natural triangulation to a complicate one.
To overcome these difficulties, we will develop new version of Yokota theory without collapsing process here.
Our method does not need to deform the diagram because it is applicable to any link diagrams without restriction.

In Section \ref{sec2} of this article, 
we define the natural potential function $V(z_1,\ldots,z_n)$ of a hyperbolic link $L$ combinatorially from the link diagram.
Then we will consider the following set of equations
\begin{equation}\label{defH}
\mathcal{H}:=\left\{\left.\exp(z_k\frac{\partial V}{\partial z_k})=1\right|k=1,\ldots,n\right\}.
\end{equation}

In Section \ref{sec3}, we introduce an ideal triangulation of $\mathbb{S}^3\backslash (L\cup\{\text{two points}\})$,
and name it {\it octahedral triangulation}. It was the same one considered in \cite{Yokota10} before the collapsing, and it also appeared
in \cite{Weeks05} as a natural triangulation of the link complement inside $\mathbb{S}^2\times [0,1]$.
On the other hand, Luo considered ideal triangulations of closed 3-manifolds by removing vertices in \cite{Luo13} and 
considered their hyperbolicity equations. Later, Luo, Tillmann and many others considered ideal triangulations of any 3-manifolds
by removing non-ideal vertices and found several properties of their hyperbolicity equations. (See \cite{Tillmann11} for example.)
We consider the hyperbolicity equations of the octahedral triangulation in this sense. 
Note that this ideal triangulation of $\mathbb{S}^3\backslash (L\cup\{\text{two points}\})$ can be
obtained by removing two non-ideal points from the triangulation of $\mathbb{S}^3\backslash L$, as in \cite{Tillmann11}.

One of the most important properties of the potential function $V$ is the following proposition.

\begin{pro}\label{pro1} For a hyperbolic link $L$ with a fixed diagram, consider the potential function $V(z_1,\ldots,z_n)$ defined in Section \ref{sec2}.
Then the set $\mathcal{H}$ defined in (\ref{defH}) becomes 
the hyperbolicity equations of the octahedral triangulation of $\mathbb{S}^3\backslash (L\cup\{\text{two points}\})$.
\end{pro}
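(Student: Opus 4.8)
The plan is to verify the equivalence equation-by-equation, matching each variable $z_k$ in the potential function $V$ to a geometric quantity in the octahedral triangulation, and then checking that $\exp(z_k\,\partial V/\partial z_k)=1$ reproduces exactly the corresponding Thurston gluing equation or completeness condition. First I would recall from Section \ref{sec2} how the variables $z_1,\ldots,z_n$ are assigned to the link diagram: each crossing contributes an octahedron, and the edges of the diagram (or the regions around crossings) carry the variables; the potential function $V$ is then a sum of dilogarithm terms $\li(\cdot)$ and logarithmic terms built from ratios of these variables, one block per crossing, mirroring the shape parameters of the four ideal tetrahedra obtained by subdividing each octahedron. The key bookkeeping step is to make explicit the dictionary between (a) the factors of $z_k$ appearing in each local block of $V$ and (b) the tetrahedra of the octahedral triangulation that are glued along the edge of the triangulation associated to $z_k$.

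Next I would compute $z_k\,\partial V/\partial z_k$ directly. Because $V$ is assembled from $\li$ and $\log$ terms, each partial derivative $\partial V/\partial z_k$ is a sum of logarithms of linear (or affine) expressions in the $z_j$, so $z_k\,\partial V/\partial z_k$ is again a sum of such logs, and $\exp(z_k\,\partial V/\partial z_k)=1$ becomes a multiplicative identity among the shape parameters. The point is to recognize that this multiplicative identity, after substituting the shape parameters $z$, $\frac{1}{1-z}$, $\frac{z-1}{z}$ of the constituent tetrahedra, is precisely the statement that the product of shape parameters around the edge $e_k$ equals $1$ (gluing equation) together with the correct $\pm$-sign/argument convention that encodes the completeness condition at the cusps. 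I would organize this into two cases: edges of the octahedral triangulation coming from crossings (interior edges) versus edges coming from the two added points / the cusp of $L$, since these give gluing equations and completeness conditions respectively; Section \ref{sec3} supplies the explicit list of tetrahedra and their shapes, so this is a finite local check per crossing type (over/under, and the two strands) that then globalizes by summing over all crossings.

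The main obstacle I expect is the combinatorial matching at the crossings, i.e.\ proving that the local block of $V$ attached to a crossing, when differentiated, yields \emph{all and only} the edge-relations contributed by that crossing's octahedron — including the subtle identifications where edges of adjacent octahedra are glued together along the diagram's arcs. In Yokota's original approach this matching was obscured by the collapsing of degenerate tetrahedra; here, working with the full octahedral triangulation of $\mathbb{S}^3\backslash(L\cup\{\text{two points}\})$, the matching should be cleaner, but one must still carefully track orientations and the cyclic order of tetrahedra around each edge so that the argument of each logarithm (not merely its exponential) is correct — this is what makes $\mathcal{H}$ the \emph{hyperbolicity} equations and not merely the gluing equations up to sign. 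Once the per-crossing dictionary is pinned down and shown to be consistent across shared arcs, summing the contributions and reading off $\exp(z_k\,\partial V/\partial z_k)=1$ for each $k$ completes the proof.
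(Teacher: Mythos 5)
Your local computation is on the right track and reproduces the first half of the paper's argument: writing the block of $V$ attached to a side $z_l$, computing $\exp(z_l\,\partial V/\partial z_l)$ as a product of factors $(1-z_a/z_b)^{\pm1}$, and identifying that product with a relation among shape parameters. However, your plan rests on the assumption that there is a clean matching under which each equation of $\mathcal{H}$ ``reproduces exactly the corresponding Thurston gluing equation or completeness condition,'' and that checking this for every $k$ completes the proof. That assumption fails, and this is where the genuine gap lies: $\mathcal{H}$ contains only $n$ equations, one per side of the diagram, while the hyperbolicity equations include a gluing equation for \emph{every} edge class of the triangulation plus meridian and longitude completeness conditions. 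The paper handles this by partitioning the edges into four sets $\mathcal{A}$ (horizontal), $\mathcal{B}$ (the twisted edges ${\rm B}_k{\rm F}_k={\rm D}_k{\rm F}_k$, ${\rm A}_k{\rm E}_k={\rm C}_k{\rm E}_k$ and their gluing partners), $\mathcal{C}$ (the axes ${\rm E}_k{\rm F}_k$) and $\mathcal{D}$ (the rest). The equations of $\mathcal{A}$ and $\mathcal{C}$ hold trivially because the shape parameters are ratios that telescope; each equation of $\mathcal{H}$ is then shown (by a three-case analysis of how consecutive octahedra are glued along a side, not by your dichotomy of ``interior edges versus edges at the two added points'') to be either a meridian completeness condition or the gluing equation of a $\mathcal{D}$-edge.

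What your proposal omits entirely is the remaining implication: the gluing equations of the $\mathcal{B}$-edges are \emph{not} among the equations $\exp(z_k\,\partial V/\partial z_k)=1$ and must be \emph{derived} from the meridian completeness conditions together with the $\mathcal{A},\mathcal{C},\mathcal{D}$ equations. The paper does this with a separate geometric argument on the cusp diagrams, showing that certain segments around the point $b_k=d_k$ (respectively $a_k=c_k$) are forced to be parallel, whence the holonomy around the corresponding $\mathcal{B}$-edge is trivial; the longitude completeness condition is then deduced at the end from the meridian conditions and the full set of gluing equations. Without an argument of this kind your verification would only show that $\mathcal{H}$ is a \emph{subset} of consequences of the hyperbolicity equations, not that $\mathcal{H}$ \emph{is} (i.e., generates) the hyperbolicity equations, which is the actual content of the proposition.
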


The exact construction of the triangulation and the proof will be in Section \ref{sec3}.
We remark that this proposition also holds for the potential functions of the collapsed cases in \cite{Yokota10} and \cite{Cho13b},
but the proof of this article is more natural and far easier than the previous ones. 
This is because the collapsing process distorts the natural geometry of the triangulation,
so one has to keep track of the changes carefully.

Let $\mathcal{S}=\{(z_1,\ldots,z_n)\}$ be the set of solutions\footnote{
We only consider solutions satisfying the condition that, 
when the potential function is expressed by $V(z_1,\ldots,z_n)=\sum\pm\li(\frac{z_a}{z_b})$,
the variables inside the dilogarithms satisfy $\frac{z_a}{z_b}\notin\{0,1,\infty\}$. 
Previously, in \cite{Yokota10} and \cite{Cho13b}, these solutions were called essential solutions.}
of $\mathcal{H}$ in $\mathbb{C}^n$. In this article, we always assume $\mathcal{S}\neq \emptyset$.
Then, by Theorem 1 of \cite{Tillmann11}, all edges in the octahedral triangulation are essential.
(Essential edge roughly means it is not null-homotopic. See \cite{Tillmann11} for the exact definition.)
Therefore, using Yoshida's construction in Section 4.5 of \cite{Tillmann13}, for a solution $\bold{z}\in \mathcal{S}$, 
we can obtain the boundary-parabolic representation\footnote{
The solution $\bold{z}\in\mathcal{S}$ satisfies the completeness condition, so $\rho_{\bold{z}}$ is boundary-parabolic.}
\begin{equation}\label{repre}
\rho_{\bold{z}}:\pi_1(\mathbb{S}^3\backslash L)\longrightarrow{\rm PSL}(2,\mathbb{C}).\end{equation}
Note that the volume $\vol(\rho_{\bold{z}})$ and the Chern-Simons invariant $\cs(\rho_{\bold{z}})$
of $\rho_{\bold{z}}$ were defined in \cite{Zickert09}.
We call $\vol(\rho_{\bold{z}})+i\,\cs(\rho_{\bold{z}})$ {\it the complex volume of} $\rho_{\bold{z}}$.

For the solution set $\mathcal{S}$, let $\mathcal{S}_j$ be a path component of $\mathcal{S}$ 
satisfying $\mathcal{S}=\cup_{j\in J}\mathcal{S}_j$ for some index set $J$. We assume $0\in J$ for notational convenience. 
To obtain well-defined values of the potential function $V(z_1,\ldots,z_n)$ (see Lemma \ref{branch}), we slightly modify it  to
\begin{equation}\label{defV_0}
V_0(z_1,\ldots,z_n):=V(z_1,\ldots,z_n)-\sum_{k=1}^n \left(z_k\frac{\partial V}{\partial z_k}\right)\log z_k.
\end{equation}
Then the main result of this article is as follows:

\begin{thm}\label{thm1}
Let $L$ be a hyperbolic link with a fixed diagram and  $V(z_1,\ldots,z_n)$ be the potential function of the diagram. 
Assume the solution set $\mathcal{S}=\cup_{j\in J}\mathcal{S}_j$ is not empty.
Then, for any ${\bold z}\in \mathcal{S}_j$, $V_0({\bold z})$ is constant (depends only on $j$) and
\begin{equation}\label{V1}
V_0(\bold{z})\equiv i\,(\vol(\rho_{\bold z})+i\,\cs(\rho_{\bold z}))\modulo,
\end{equation}
where $\rho_{\bold z}$ is the boundary-parabolic representation obtained in (\ref{repre}).
Furthermore, there exists a path component $\mathcal{S}_0$ of $\mathcal{S}$ satisfying
\begin{equation}\label{V2}
V_0(\bold{z_{\infty}})\equiv i\,(\vol(L)+i\,\cs(L))\modulo,
\end{equation}
for all ${\bold z}_{\infty}\in \mathcal{S}_0$.
\end{thm}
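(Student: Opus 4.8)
The plan is to deduce the theorem from Proposition~\ref{pro1} together with the extended Bloch group computation of the complex volume. I would organize it in four steps: first, show that $V_0$ is locally constant on $\mathcal{S}$; second, use Proposition~\ref{pro1} to read the shape parameters of the octahedral triangulation off a solution $\bold z$ and recall the representation $\rho_{\bold z}$ of (\ref{repre}); third, identify $V_0(\bold z)$ with the complex volume of $\rho_{\bold z}$; and fourth, exhibit the distinguished component $\mathcal{S}_0$ carrying the complete structure.

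For the first step, once Lemma~\ref{branch} has pinned down $V_0$ modulo $\pi^2$, a direct computation from (\ref{defV_0}) gives, for each $m$,
\[
\frac{\partial V_0}{\partial z_m}=-\sum_{k=1}^{n}\left(\frac{\partial}{\partial z_m}\Bigl(z_k\frac{\partial V}{\partial z_k}\Bigr)\right)\log z_k .
\]
Along any path $\gamma(t)$ inside a component $\mathcal{S}_j$, each function $z_k\,\partial V/\partial z_k$ takes values in $2\pi i\,\mathbb{Z}$ by the definition of $\mathcal{H}$ in (\ref{defH}), hence is constant; therefore $\frac{d}{dt}V_0(\gamma(t))=-\sum_k(\log z_k)\,\frac{d}{dt}\bigl(z_k\,\partial V/\partial z_k\bigr)=0$, so $V_0$ is constant on each $\mathcal{S}_j$.

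For the middle steps, Proposition~\ref{pro1} says that when $V=\sum\pm\li(\tfrac{z_a}{z_b})$ the arguments $\tfrac{z_a}{z_b}$ are precisely the shape parameters of the tetrahedra of the octahedral triangulation of $\mathbb{S}^3\backslash(L\cup\{\text{two points}\})$, and that $\mathcal{H}$ is the full system of Thurston gluing and completeness equations; the essential-solution hypothesis keeps every shape away from $\{0,1,\infty\}$, and Yoshida's construction \cite{Tillmann10} produces $\rho_{\bold z}$. I would then match $V_0(\bold z)$ with the complex volume via Neumann's extended Bloch group, in the form used by Zickert \cite{Zickert09}: on $\mathcal{S}$ each summand $\pm\li(\tfrac{z_a}{z_b})$, corrected by the appropriate part of $-\sum_k(z_k\,\partial V/\partial z_k)\log z_k$, becomes a value of Neumann's Rogers dilogarithm on a \emph{flattened} ideal triangulation, the integer flattening parameters being read off from $\tfrac{1}{2\pi i}\,z_k\,\partial V/\partial z_k\in\mathbb{Z}$. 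The equations $\exp(z_k\,\partial V/\partial z_k)=1$ are exactly the edge- and cusp-flattening conditions, so the flattened triangulation gives a class in $\widehat{\mathcal B}(\mathbb{C})$ mapping to $[\rho_{\bold z}]$, and Neumann's theorem then yields $V_0(\bold z)\equiv i(\vol(\rho_{\bold z})+i\,\cs(\rho_{\bold z}))\modulo$, i.e.\ (\ref{V1}); since $\rho_{\bold z}$ already factors through $\pi_1(\mathbb{S}^3\backslash L)$, this is the complex volume of the link representation and not of the two-extra-punctures manifold.

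For the last step I would produce $\mathcal{S}_0$ from the complete hyperbolic structure: the octahedral triangulation is the natural one of \cite{Weeks05} and, by \cite{Tillmann11}, is obtained from a triangulation of $\mathbb{S}^3\backslash L$ by removing two non-ideal vertices, so the developing map of the complete structure assigns (possibly degenerate) shapes to all octahedra, hence to all tetrahedra, giving a point $\bold z_\infty$ solving $\mathcal{H}$ with $\rho_{\bold z_\infty}$ the discrete faithful representation; then $\mathcal{S}_0$ is the component of $\bold z_\infty$ and (\ref{V2}) follows from (\ref{V1}). The main obstacle is precisely this last step: one must verify that the geometric point of the gluing variety actually lies in $\mathcal{S}$ — that is, that the ideal octahedra of the complete structure do not degenerate so badly that some tetrahedron shape falls into $\{0,1,\infty\}$ and the solution ceases to be essential — and that the octahedral triangulation carries the complete structure in a form to which Yoshida's construction applies. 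I expect this to require a careful analysis of the behavior of the octahedra near the two extra ideal points, probably by comparison with the known good triangulations and with the collapsed picture of \cite{Yokota10}.
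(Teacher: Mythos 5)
Your first step is exactly the paper's Lemma \ref{lem1}, and your overall strategy for (\ref{V1}) --- apply Zickert's extended Bloch group formula to a flattening of the octahedral triangulation --- is also the paper's. However, your claim that ``the integer flattening parameters are read off from $\tfrac{1}{2\pi i}\,z_k\partial V/\partial z_k$'' is not correct as stated, and it conceals where the real work lies. The flattening integers $p,q$ are attached to individual tetrahedra and are determined (via equation (3.5) of \cite{Zickert09}) by a decoration $g_{jk}$ of the edges coming from the developing map of $\rho_{\bold z}$, not by the side variables; a single side $z_l$ meets four tetrahedra, each with its own $(p_l^m,q_l^m)$. What is actually true, and what must be proved, is that the integer $r_l$ with $z_l\partial V/\partial z_l=r_l\pi i$ equals a signed sum of the $q_l^m$ over those four tetrahedra (Lemma \ref{lem42}), and that after summing over all tetrahedra both the $q$-part and the $p$-part of Zickert's function $\widehat{L}$ reduce to $-\sum_l r_l\pi i\log z_l$, i.e.\ to the correction term in $V_0$; the latter requires the auxiliary identity $\log\alpha_l-\log\beta_l\equiv\log z_l+A$ (Observation \ref{obs}) together with $\sum_l r_l=0$ to kill the constant $A$ (Corollary \ref{cor4} and Lemma \ref{lem44}). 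Without some version of this bookkeeping your third step is an assertion rather than a proof, although the strategy would go through once it is supplied.

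The genuine gap is the one you flag yourself in the last step. Constructing $\bold z_\infty$ by straightening the complete structure on the octahedral triangulation requires spinning the two non-ideal vertices $\pm\infty$ out to $\partial\overline{\mathbb{H}^3}$, and nothing in your argument prevents some resulting tetrahedron from degenerating, i.e.\ some shape parameter from landing in $\{0,1,\infty\}$, in which case the point fails to lie in $\mathcal{S}$. The paper does not construct $\bold z_\infty$ from the geometry at all: it invokes Theorem 1.1 of \cite{Tillmann10}, which (via the spinning construction) produces a point of the parameter space of Thurston's gluing equations --- hence with non-degenerate shapes by definition --- whose associated representation has volume equal to $\vol(L)$. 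Thurston--Gromov--Goldman rigidity then forces $\rho_{\bold z_\infty}$ to be the discrete faithful representation, which is in particular boundary-parabolic, so $\bold z_\infty$ automatically satisfies the completeness conditions as well and therefore lies in $\mathcal{S}$; its path component is $\mathcal{S}_0$ and (\ref{V2}) follows from (\ref{V1}). To close your argument you should either replace your direct construction by this existence-plus-rigidity argument or carry out the degeneration analysis you anticipate, which is considerably harder.
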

The proof will be given in Section \ref{sec4}. The main idea is to use
Zickert's formula of the extended Bloch group in \cite{Zickert09}, which was already appeared in \cite{Yokota10}. 
 However, our proof is simpler because we do not consider any collapsing. 
We call the value $V_0({\bold z})$ {\it the optimistic limit of the Kashaev invariant}
 and note that it depends on the choice of the diagram and the path component $\mathcal{S}_j$.
Finally, in Section \ref{sec5}, we apply our results to the twist knots and calculate the complex volumes of representations.

	Although we restricted $L$ to hyperbolic links, Proposition \ref{pro1} and Theorem \ref{thm1} 
 still hold for   non-hyperbolic links \footnote{In this case, we need a minor assumption that no component of the link diagram has only over-crossings or only under-crossings.} except for the existence of $S_0$ and (\ref{V2}).
(The definitions of $\vol(\rho_{\bold z})$ and $\cs(\rho_{\bold z})$ are from \cite{Zickert09}.)
That is because we do not use the hyperbolic structure of $L$ 
but the boundary-parabolic representation $\rho_{\bold z}$ in (\ref{repre}), which can be
non-discrete or non-faithful.



 Finally, we remark that the following relation 
$$J_L(N;\exp\frac{2\pi i}{N})=\langle L \rangle_N$$
was proved in \cite{Murakami01a}, where $J_L(N;x)$ is the $N$-th colored Jones polynomial of $L$ with a complex variable $x$.
Therefore, it is very natural to consider the optimistic limit of the colored Jones polynomial,
and it will be discussed in the first author's another article \cite{Cho13c}.

\section{Potential function $V(z_1,\ldots,z_n)$}\label{sec2}

Consider a hyperbolic link $L$ and its diagram $D$. For simplicity,
we always assume $D$ does not have any kink by removing them as in Figure \ref{kink}.
\begin{figure}[h]
\centering
  \includegraphics[scale=0.3]{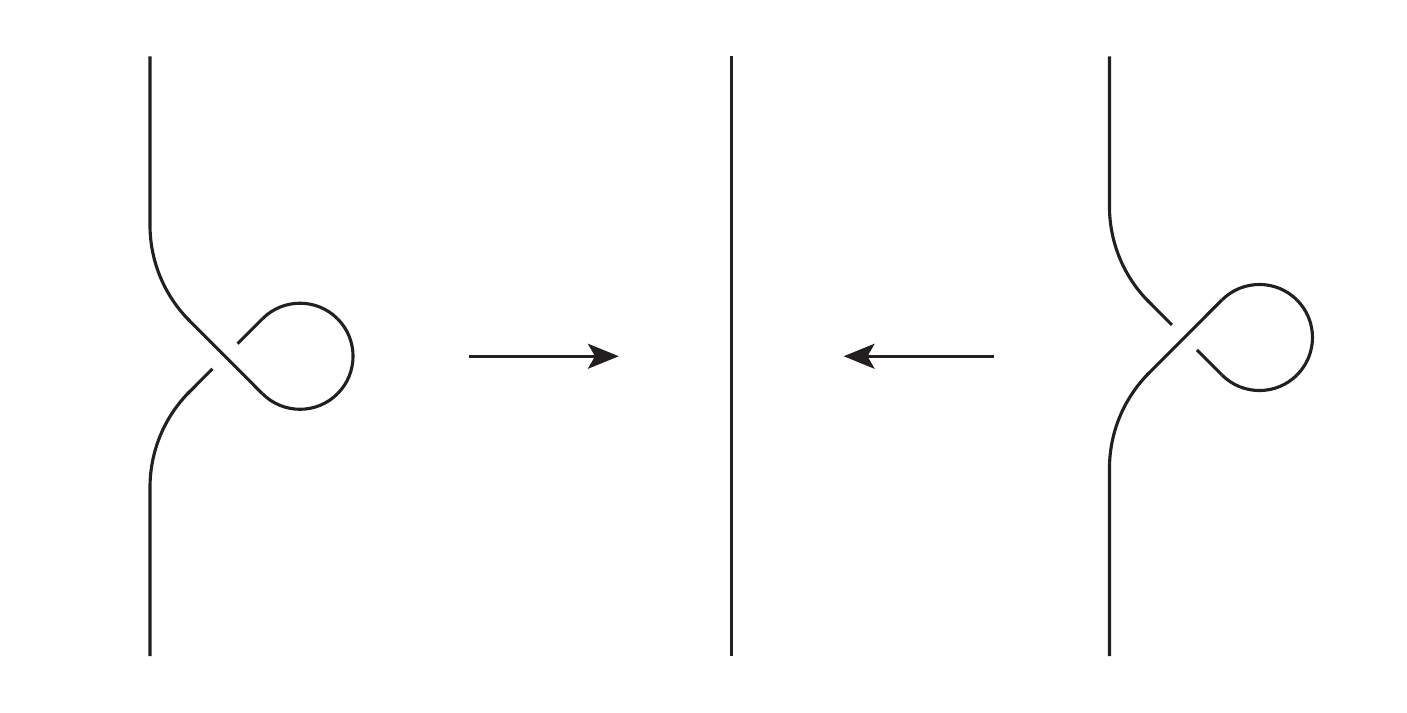}
  \caption{Removing kinks}\label{kink}
\end{figure}

We define {\it sides} of $D$ by the arcs connecting two adjacent crossing points.\footnote{
Most people use the word {\it edge} instead of {\it side} we are using here.
However, in this paper, we want to keep the word {\it edge} for the edge of a tetrahedron.}
For example, the diagram of the figure-eight knot $4_1$ in Figure \ref{4_1} has 8 sides.

\begin{figure}[h]
\centering
  \includegraphics[scale=0.6]{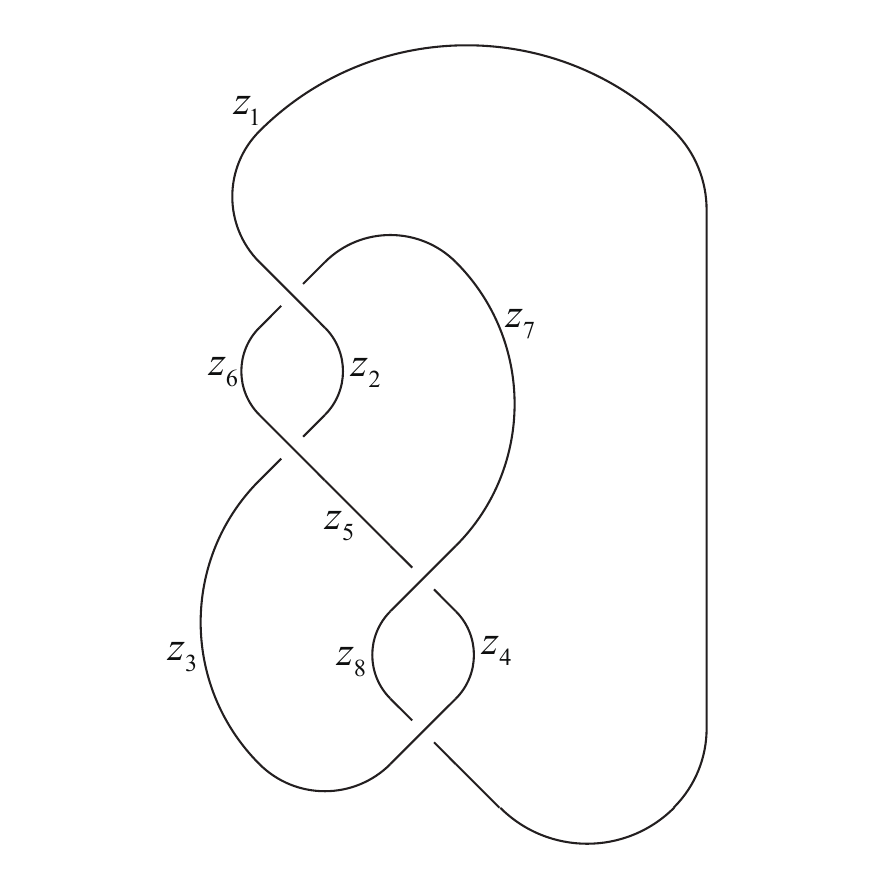}
  \caption{The figure-eight knot $4_1$}\label{4_1}
\end{figure}

We assign complex variables $z_1,\ldots,z_n$ to each side of the diagram $D$.
Using the dilogarithm function $\li(z)=-\int_0^z \frac{\log(1-t)}{t}dt$,
we define the potential function of a crossing as in Figure \ref{pic2}.

\begin{figure}[h]\centering
{\setlength{\unitlength}{0.4cm}
  \begin{picture}(30,6)\thicklines
    \put(6,5){\line(-1,-1){4}}
    \put(2,5){\line(1,-1){1.8}}
    \put(4.2,2.8){\line(1,-1){1.8}}
    \put(1,5.3){$z_d$}
    \put(6,5.3){$z_c$}
    \put(1,0.2){$z_a$}
    \put(6,0.2){$z_b$}
    \put(7,3){$\displaystyle\longrightarrow ~~\li(\frac{z_b}{z_a})-\li(\frac{z_b}{z_c})+\li(\frac{z_d}{z_c})-\li(\frac{z_d}{z_a})$}
  \end{picture}}
  \caption{Potential function of a crossing}\label{pic2}
\end{figure}
Note that the potential function in Figure \ref{pic2} comes from the formal substitution of the R-matrix of the Kashaev invariant in \cite{YokotaPre}. (See \cite{Cho13b} for the meaning of the formal substitution.)
In \cite{Cho10b}, we defined the potential function of the corner of a crossing from Figure \ref{corner}.
Following this definition, the potential function of a crossing is then the summation 
of potential functions of the four corners.

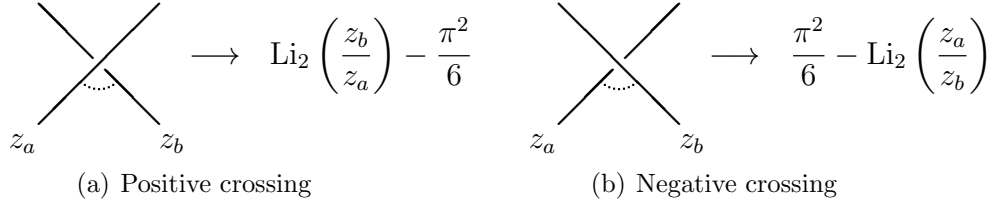
\begin{figure}[h]
\centering
  \subfigure[Positive crossing]
  { {\setlength{\unitlength}{0.4cm}
  \begin{picture}(15,6)\thicklines
    \put(6,5){\line(-1,-1){4}}
    \put(6,1){\line(-1,1){1.8}}
    \put(3.8,3.2){\line(-1,1){1.8}}
    \put(4,3){\arc[2](-0.6,-0.6){90}}
    \put(1,0.2){$z_a$}
    \put(6,0.2){$z_b$}
    \put(7,3){$\displaystyle\longrightarrow ~~\li\left(\frac{z_b}{z_a}\right)-\frac{\pi^2}{6}$}
  \end{picture}}}\hspace{0.5cm}
  \subfigure[Negative crossing]
  { {\setlength{\unitlength}{0.4cm}
  \begin{picture}(15,6)\thicklines
    \put(2,5){\line(1,-1){4}}
    \put(2,1){\line(1,1){1.8}}
    \put(4.2,3.2){\line(1,1){1.8}}
    \put(4,3){\arc[2](-0.6,-0.6){90}}
    \put(1,0.2){$z_a$}
    \put(6,0.2){$z_b$}
    \put(7,3){$\displaystyle\longrightarrow ~~\frac{\pi^2}{6}-\li\left(\frac{z_a}{z_b}\right)$}
  \end{picture}}}
  \caption{Potential function of a corner}\label{corner}
\end{figure}

{\it The potential function} $V(z_1,\ldots,z_n)$ of the diagram $D$ is defined by the summation of
all potential functions of the crossings. For example, the potential function of the figure-eight knot $4_1$
in Figure \ref{4_1} is
\begin{eqnarray*}
V(z_1,\ldots,z_8)=\left\{\li(\frac{z_6}{z_1})-\li(\frac{z_6}{z_2})+\li(\frac{z_7}{z_2})-\li(\frac{z_7}{z_1})\right\}\\
+\left\{\li(\frac{z_3}{z_6})-\li(\frac{z_3}{z_5})+\li(\frac{z_2}{z_5})-\li(\frac{z_2}{z_6})\right\}\\
+\left\{\li(\frac{z_4}{z_8})-\li(\frac{z_4}{z_7})+\li(\frac{z_5}{z_7})-\li(\frac{z_5}{z_8})\right\}\\
+\left\{\li(\frac{z_1}{z_3})-\li(\frac{z_1}{z_4})+\li(\frac{z_8}{z_4})-\li(\frac{z_8}{z_3})\right\}.
\end{eqnarray*}

We define a modified potential function $V_0(z_1,\ldots,z_n)$ as given in (\ref{defV_0}). 
Note that $V_0$ is analytic since the dilogarithm function $\li(z)$ is analytic and the term $z_k\frac{\partial V}{\partial z_k}$
consists of logarithms.
This property will be used implicitly in Lemma \ref{lem1} below.

Recall that $\mathcal{H}$ was defined in (\ref{defH}). 
Also recall that we are considering the solutions ${\bold z}=(z_1,\ldots,z_n)\in\mathbb{C}^n$ of $\mathcal{H}$ with the property
that if the potential function is expressed by $V(z_1,\ldots,z_n)=\sum\pm\li(\frac{z_a}{z_b})$,
then variables inside the dilogarithms satisfy $\frac{z_a}{z_b}\notin\{0,1,\infty\}$.
This choice is reasonable because, if $\frac{z_a}{z_b}\in\{0,1,\infty\}$ for some solution, then at least one of the terms 
$\frac{\partial V}{\partial z_a}$ and $\frac{\partial V}{\partial z_b}$ of $V_0(z_1,\ldots,z_n)$
is not well-defined at that solution.

In this article, we always assume the solution set $\mathcal{S}\subset\mathbb{C}^n$ of $\mathcal{H}$ is nonempty.
We cannot guarantee $\mathcal{S}\neq\emptyset$ for any link diagram. 
For example, the link diagrams containing Figure \ref{nosolution} always satisfy $\mathcal{S}=\emptyset$
because $\exp(z_4\frac{\partial V}{\partial z_4})=1$ implies $z_1=z_3$.
However, we can easily remove this problem by reducing the redundant crossings in this case.
We expect that if $\mathcal{S}=\emptyset$ for a given link diagram, 
changing the diagram properly makes $\mathcal{S}\neq\emptyset$.

\begin{figure}[h]
\centering
  \includegraphics[scale=1]{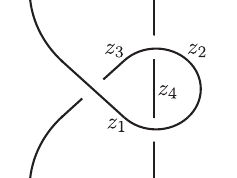}
  \caption{Diagram with $\mathcal{S}=\emptyset$}\label{nosolution}
\end{figure}

Note that the functions $\li(z)$ and $\log z$ are multi-valued functions. Therefore, to obtain well-defined values,
we have to select proper branch of the logarithm by choosing $\arg z$ and $\arg(1-z)$.
The following lemma shows why we consider the potential function $V_0$ instead of $V$.

\begin{lem}\label{branch}
Let $\bold{z}=(z_1,\ldots,z_n)\in\mathcal{S}$. 
For the potential function $V(z_1,\ldots,z_n)$, the value of $V_0(\bold{z})$ 
is invariant under a choice of branch of the logarithm modulo $4\pi^2$.
\end{lem}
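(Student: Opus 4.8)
The statement is about how $V_0(\mathbf z)$ changes when we alter the branches of the logarithms hidden in $V$. Since $V(z_1,\ldots,z_n) = \sum \pm\operatorname{Li}_2(z_a/z_b)$ and $\operatorname{Li}_2$ carries its own multivaluedness through $\log(1-t)$ in its defining integral, a change of branch in any one dilogarithm term $\operatorname{Li}_2(z_a/z_b)$ shifts that term by an integer combination of $2\pi i\log(z_a/z_b)$ and $(2\pi i)^2$; likewise each explicit $\log z_k$ appearing in the correction term $\sum_k (z_k \partial V/\partial z_k)\log z_k$ can shift by $2\pi i$. The strategy is to track exactly how these shifts propagate into $V_0$ and show that, \emph{on a solution} $\mathbf z\in\mathcal S$, all the $2\pi i(\cdot)$-order terms cancel, leaving only a change that is an integer multiple of $(2\pi i)^2 = -4\pi^2$.

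\textbf{Key steps, in order.} First I would write $V = \sum_{j} \varepsilon_j \operatorname{Li}_2(z_{a_j}/z_{b_j})$ with $\varepsilon_j=\pm1$ and record the two elementary branch-change formulas: for the dilogarithm, $\operatorname{Li}_2(w)$ changes by $-2\pi i\,p\log w + (2\pi i)^2 q$ for some $p,q\in\mathbb Z$ coming from choices of $\arg w$ and $\arg(1-w)$; for the logarithm, $\log z_k \mapsto \log z_k + 2\pi i\, m_k$. Second, compute $z_k\frac{\partial V}{\partial z_k}$ explicitly: differentiating $\operatorname{Li}_2(z_a/z_b)$ gives terms of the form $-\log(1-z_a/z_b)$ (contributing to the $\partial_{z_a}$ direction) and $+\log(1-z_a/z_b)$ together with a $\log(z_a/z_b)$ piece (contributing to $\partial_{z_b}$), so that $z_k\partial V/\partial z_k$ is itself a $\mathbb Z$-linear combination of logs; this is the object that must equal a multiple of $2\pi i$ on $\mathcal S$ since $\exp(z_k\partial V/\partial z_k)=1$. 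Third, substitute the branch-changed expressions into $V_0 = V - \sum_k (z_k\partial V/\partial z_k)\log z_k$ and collect terms by order: the $(2\pi i)^0$ part is unchanged; the $(2\pi i)^1$ part is where the real work is — the change in $V$ produces terms $-2\pi i\, p_j \log(z_{a_j}/z_{b_j})$, while the change in the correction term produces both $-\big(\Delta(z_k\partial V/\partial z_k)\big)\log z_k$ and $-\big(z_k\partial V/\partial z_k\big)\cdot 2\pi i\, m_k$. Using $z_k\partial V/\partial z_k \in 2\pi i\mathbb Z$ on $\mathcal S$, the last piece is already of order $(2\pi i)^2$. The remaining order-$(2\pi i)^1$ terms must be shown to cancel: this is a bookkeeping identity saying that the log-coefficients arising from reinterpreting the branch of each $\operatorname{Li}_2$ are exactly compensated by the $\log z_k$ multiplying the corresponding entries of $z_k\partial V/\partial z_k$. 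Finally, everything left over is manifestly in $(2\pi i)^2\mathbb Z = 4\pi^2\mathbb Z$, giving the claim mod $4\pi^2$.

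\textbf{The main obstacle.} The crux is the order-$(2\pi i)^1$ cancellation in the third step. One has to match, term by term across all crossings, the logarithmic contribution from changing the branch of a dilogarithm $\operatorname{Li}_2(z_a/z_b)$ against the contribution coming from the explicit $\log z_k$ factors in $V_0$ paired with the derivative terms $z_k\partial V/\partial z_k$. Concretely, the identity to verify is that for each dilogarithm term the combination $\operatorname{Li}_2(z_a/z_b) - (\text{its }z_a\text{- and }z_b\text{-derivative pieces})\cdot(\log z_a \text{ resp. }\log z_b)$ is built so that a branch change in $\log(z_a/z_b)$ inside $\operatorname{Li}_2$ is absorbed — this is essentially the reason $V_0$ rather than $V$ is the natural object, and it mirrors the classical fact that the Rogers dilogarithm / extended-Bloch combination $\operatorname{Li}_2(z) + \tfrac12\log z\log(1-z)$ has controlled branch behavior. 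I expect the computation to reduce to checking the sign conventions of Figure \ref{pic2} make the four dilogarithms at each crossing combine correctly; once the per-crossing identity is in hand, summing over crossings and invoking $\exp(z_k\partial V/\partial z_k)=1$ finishes the proof, with the residual ambiguity landing in $4\pi^2\mathbb Z$ exactly because two independent integers ($p$ and $q$, or equivalently $\arg w$ and $\arg(1-w)$) enter, and only their $(2\pi i)^2$-combination survives.
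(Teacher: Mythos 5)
Your proposal is correct and follows essentially the same route as the paper: a branch change of $\li(z_l/z_m)$ shifts it by $2a\pi i\log(z_l/z_m)$ modulo $4\pi^2$, and this is absorbed by the induced shifts $\pm 2a\pi i$ in $z_l\frac{\partial V}{\partial z_l}$ and $z_m\frac{\partial V}{\partial z_m}$ multiplying $\log z_l$ and $\log z_m$, while the ambiguity $2\pi i\,m_k$ of each explicit $\log z_k$ is killed modulo $4\pi^2$ by $z_k\frac{\partial V}{\partial z_k}\in 2\pi i\,\mathbb{Z}$ on $\mathcal{S}$. One minor simplification over what you anticipate: the order-$2\pi i$ cancellation holds term by term for each individual dilogarithm, so no per-crossing combination of the four terms of Figure \ref{pic2} is required.
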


\begin{proof}
Let $\li^*(z)$ and $\log^* z$ be the functions with different log-branch 
corresponding to an analytic continuation of $\li(z)$ and $\log z$ respectively.
Also let $V(z_1,\ldots,z_n)=\sum\pm\li(\frac{z_l}{z_m})$.
Then 
$$\li^*(\frac{z_l}{z_m})\equiv\li(\frac{z_l}{z_m})+2a\pi i\log\frac{z_l}{z_m}~~({\rm mod}~4\pi^2)$$
for a certain integer $a$, 
$$\log^*z_l\equiv\log z_l,~\log^*z_m\equiv\log z_m,~ \log^*\frac{z_l}{z_m}\equiv\log \frac{z_l}{z_m}~~({\rm mod}~2\pi i),$$
and, because of $\bold{z}=(z_1,\ldots,z_n)\in\mathcal{S}$, we have
$$z_l \frac{\partial ({\sum\pm}\li(\frac{z_l}{z_m}))}{\partial z_l}\equiv 
z_m \frac{\partial ({\sum\pm}\li(\frac{z_l}{z_m}))}{\partial z_m}\equiv 0~~({\rm mod}~2\pi i).$$

Therefore, 
\begin{eqnarray*}
\lefteqn{{\sum\pm}\left\{\li^*(\frac{z_l}{z_m})-\left(z_l \frac{\partial \li^*({z_l}/{z_m})}{\partial z_l}\right)\log^* z_l
-\left(z_m \frac{\partial \li^*({z_l}/{z_m})}{\partial z_m}\right)\log^* z_m\right\}}\\
&\equiv&{\sum\pm}\left\{\li(\frac{z_l}{z_m})+2a\pi i\log\frac{z_l}{z_m}-\left(z_l \frac{\partial \li({z_l}/{z_m})}{\partial z_l}\right)\log^* z_l-2a\pi i\log^* z_l\right.\\
&&\left.-\left(z_m \frac{\partial \li({z_l}/{z_m})}{\partial z_m}\right)\log^* z_m+2a\pi i\log^* z_m\right\}\\
&\equiv&{\sum\pm}\left\{\li(\frac{z_l}{z_m})+2a\pi i\log\frac{z_l}{z_m}-\left(z_l \frac{\partial \li({z_l}/{z_m})}{\partial z_l}\right)\log z_l-2a\pi i\log z_l\right.\\
&&\left.-\left(z_m \frac{\partial \li({z_l}/{z_m})}{\partial z_m}\right)\log z_m+2a\pi i\log z_m\right\}\\
&\equiv&{\sum\pm}\left\{\li(\frac{z_l}{z_m})-\left(z_l \frac{\partial \li({z_l}/{z_m})}{\partial z_l}\right)\log z_l
-\left(z_m \frac{\partial \li({z_l}/{z_m})}{\partial z_m}\right)\log z_m\right\}\modulos.
\end{eqnarray*}
The potential function $V_0$ is the summation of the above terms, so the proof follows. 

\end{proof}

\begin{lem}\label{lem1}
Let $\mathcal{S}=\cup_{j\in J}\mathcal{S}_j\subset\mathbb{C}^n$ be the solution set of $\mathcal{H}$ with $\mathcal{S}_j$ being a path component.
Assume $\mathcal{S}\neq\emptyset$.
Then, for any ${\bold z}=(z_1,\ldots,z_n)\in \mathcal{S}_j$,
$$V_0({\bold z})\equiv C_j\modulos,$$
where $C_j$ is a complex constant depending only on $j\in J$.
\end{lem}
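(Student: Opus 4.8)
The plan is to show that the analytic function $V_0$ is locally constant along $\mathcal{S}_j$ by computing its differential and checking it vanishes on $\mathcal{S}$, using the equations $\mathcal{H}$. Introduce the shorthand $w_k:=z_k\frac{\partial V}{\partial z_k}$, so that $V_0=V-\sum_{k=1}^n w_k\log z_k$. The first step is the product-rule computation
$$\frac{\partial V_0}{\partial z_i}=\frac{\partial V}{\partial z_i}-\sum_{k=1}^n\frac{\partial w_k}{\partial z_i}\log z_k-\sum_{k=1}^n w_k\frac{\partial\log z_k}{\partial z_i}=-\sum_{k=1}^n\frac{\partial w_k}{\partial z_i}\log z_k,$$
where the last equality uses $\sum_k w_k\,\partial_i\log z_k=w_i/z_i=\frac{\partial V}{\partial z_i}$, which cancels the first term. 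At this point I would recall that, by the essential-solution condition $\frac{z_a}{z_b}\notin\{0,1,\infty\}$, every logarithm appearing here — the $\log z_k$, and the $\log(1-\frac{z_a}{z_b})$ hidden inside $\frac{\partial V}{\partial z_i}$ via $\li'(z)=-\frac{\log(1-z)}{z}$ — is finite and holomorphic in a neighbourhood of any point of $\mathcal{S}$ (in particular $z_k\neq0$), so $V_0$ and the formula above make sense there after an analytic choice of branches, continued along paths; Lemma \ref{branch} guarantees the endpoint value of $V_0$ is then independent of these choices modulo $4\pi^2$.

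Next I would use $\mathcal{H}$. On $\mathcal{S}$ each equation says $\exp(w_k)=1$, i.e. $w_k\in2\pi i\mathbb{Z}$. Since $\mathbf{z}\mapsto w_k(\mathbf{z})$ is continuous on the connected set $\mathcal{S}_j$ (branches continued as above) and takes values in the discrete set $2\pi i\mathbb{Z}$, it is constant there: $w_k\equiv2\pi i\,m_k$ for fixed integers $m_k=m_k(j)$. Now let $\gamma\colon[0,1]\to\mathcal{S}_j$ be a piecewise-smooth path joining two chosen points of $\mathcal{S}_j$. Then
$$\frac{d}{dt}V_0(\gamma(t))=\sum_{i=1}^n\frac{\partial V_0}{\partial z_i}(\gamma(t))\,z_i'(t)=-\sum_{k=1}^n\log z_k(t)\sum_{i=1}^n\frac{\partial w_k}{\partial z_i}(\gamma(t))\,z_i'(t)=-\sum_{k=1}^n\log z_k(t)\,\frac{d}{dt}\bigl(w_k\circ\gamma\bigr)(t)=0,$$
because $w_k\circ\gamma\equiv2\pi i\,m_k$ is constant. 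Hence $V_0$ is constant along $\gamma$; as any two points of $\mathcal{S}_j$ can be so joined, $V_0$ is constant on $\mathcal{S}_j$, and together with Lemma \ref{branch} this yields $V_0(\mathbf{z})\equiv C_j\modulos$ for all $\mathbf{z}\in\mathcal{S}_j$.

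The main (indeed essentially the only) delicate point is the multivaluedness of $\li$ and $\log$: one must make sure that continuing $V_0$ along a path in $\mathcal{S}_j$ produces a well-defined value, and it is precisely Lemma \ref{branch} — whose proof rests on $w_k\in2\pi i\mathbb{Z}$ on $\mathcal{S}$, making the branch ambiguity land in $4\pi^2\mathbb{Z}$ — that supplies this. A secondary, more technical issue is the existence of piecewise-smooth arcs inside the analytic set $\mathcal{S}_j$; this is standard for analytic sets, and can in any case be sidestepped by observing that the computation above shows $dV_0$ annihilates the tangent space of $\mathcal{S}$ at every smooth point (since $dw_k$ does), so $V_0$ is locally constant on the smooth locus, and then constant on each connected component of $\mathcal{S}$ by continuity of $V_0$ across the (codimension $\geq1$) singular locus.
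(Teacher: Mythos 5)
Your proof is correct and follows essentially the same route as the paper: both arguments rest on the observation that $w_k=z_k\frac{\partial V}{\partial z_k}$ is continuous on $\mathcal{S}_j$ and valued in the discrete set $2\pi i\mathbb{Z}$, hence constant there, and then differentiate $V_0$ along a path in $\mathcal{S}_j$ to see the derivative vanishes. The only difference is cosmetic --- you compute $dV_0=-\sum_k \log z_k\, dw_k$ first and then use $dw_k=0$ along the path, whereas the paper substitutes the constant value $r_{j,k}\pi i$ into $V_0$ before differentiating --- and your extra care about branch continuation and piecewise-smooth paths only tightens points the paper leaves implicit.
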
 

\begin{proof}

Note that $z_k\frac{\partial V}{\partial z_k}$ is continuous on $\mathcal{S}_j$ and $\exp(z_k\frac{\partial V}{\partial z_k})=1$ for
any ${\bold z}\in \mathcal{S}$. Therefore,
\begin{equation}\label{def_r}
z_k\frac{\partial V}{\partial z_k}=r_{j,k}\pi i,
\end{equation}
on $\mathcal{S}_j$ for an integer constant $r_{j,k}$ depending on $j$ and $k$. 
(The integer $r_{j,k}$ can be changed when $\bold z$ passes through the branch cut of the logarithm. In this case,
we change the branch cut so that $r_{j,k}$ is locally constant. The global invariance of $V_0$ is obtained by
the local invariance discussed below and Lemma \ref{branch}.)

For any path $$\mathbf{a}(t)=(\alpha_1(t),\ldots,\alpha_n(t)):[0,1]\longrightarrow \mathcal{S}_j,$$
using (\ref{def_r}) and the Chain rule, we have
\begin{eqnarray*}
\frac{d V_0}{dt}(\mathbf{a}(t))&=&\frac{d V}{dt}(\mathbf{a}(t))-\frac{d}{dt}\left(\sum_{k=1}^n r_{j,k}\pi i\log\alpha_k(t)\right)\\
&=&\sum_{k=1}^n\frac{\partial V}{\partial z_k}(\mathbf{a}(t))\alpha_k'(t)-\sum_{k=1}^n r_{j,k}\pi i\frac{\alpha_k'(t)}{\alpha_k(t)}\\
&=&\sum_{k=1}^n \frac{r_{j,k}\pi i}{\alpha_k(t)}\alpha_k'(t)-\sum_{k=1}^n r_{j,k}\pi i\frac{\alpha_k'(t)}{\alpha_k(t)}=0.
\end{eqnarray*}
This implies $V_0$ is constant on $\mathcal{S}_j$.

\end{proof}

Although we are considering the solution set $\mathcal{S}$ in $\mathbb{C}^n$, it is more natural to consider $\mathcal{S}$ as a subset
of the complex projective space $\mathbb{CP}^{n-1}$. This fact is not used in this article, but we show the following lemma
for reference.

\begin{cor}\label{lem23}
If ${\bold z}=(z_1,\ldots,z_n) \in \mathcal{S}_j$, then $\lambda{\bold z}:=(\lambda z_1,\ldots,\lambda z_n) \in \mathcal{S}_j$
for any nonzero complex number $\lambda$. Furthermore,
$$V_0({\bold z})\equiv V_0(\lambda{\bold z})\modulos.$$
\end{cor}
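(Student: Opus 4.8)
The plan is to deduce the Corollary from Lemma~\ref{lem1}, once we know that the scaling $\bold{z}\mapsto\lambda\bold{z}$ preserves $\mathcal{S}$ and keeps us inside the path component $\mathcal{S}_j$.

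First I would record the homogeneity of the potential function. Every summand of $V$ has the form $\pm\li(z_a/z_b)$, and the argument $z_a/z_b$ is unchanged under the simultaneous rescaling $z_i\mapsto\lambda z_i$; hence $V(\lambda\bold{z})=V(\bold{z})$, i.e.\ $V$ is homogeneous of degree $0$. Differentiating the identity $V(\lambda\bold{z})=V(\bold{z})$ with respect to $z_k$ gives $\lambda\,(\partial V/\partial z_k)(\lambda\bold{z})=(\partial V/\partial z_k)(\bold{z})$, so the quantity $z_k\frac{\partial V}{\partial z_k}$ is itself homogeneous of degree $0$:
$$\left.\left(z_k\frac{\partial V}{\partial z_k}\right)\right|_{\lambda\bold{z}}
=\left.\left(z_k\frac{\partial V}{\partial z_k}\right)\right|_{\bold{z}}\qquad(k=1,\ldots,n).$$
In particular $\exp\!\left(z_k\frac{\partial V}{\partial z_k}\right)=1$ still holds at $\lambda\bold{z}$, so $\lambda\bold{z}$ satisfies $\mathcal{H}$. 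The essentiality condition $z_a/z_b\notin\{0,1,\infty\}$ is also scale-invariant since $(\lambda z_a)/(\lambda z_b)=z_a/z_b$; and because each side variable occurs inside at least one dilogarithm, a point of $\mathcal{S}$ has all coordinates nonzero, whence $\lambda\bold{z}$ has all coordinates nonzero as well and $V_0(\lambda\bold{z})$ makes sense. Therefore $\lambda\bold{z}\in\mathcal{S}$.

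Next I would connect $\bold{z}$ to $\lambda\bold{z}$ within $\mathcal{S}$: since $\mathbb{C}\setminus\{0\}$ is path-connected, choose a path $\mu:[0,1]\to\mathbb{C}\setminus\{0\}$ with $\mu(0)=1$ and $\mu(1)=\lambda$; then $t\mapsto\mu(t)\bold{z}$ is a continuous path in $\mathbb{C}^n$ lying in $\mathcal{S}$ by the previous step, so $\lambda\bold{z}$ lies in the same path component as $\bold{z}$, namely $\mathcal{S}_j$. Applying Lemma~\ref{lem1} to $\mathcal{S}_j$ then gives $V_0(\bold{z})\equiv C_j\equiv V_0(\lambda\bold{z})\modulos$, which is the asserted congruence.

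Nothing in this argument is genuinely difficult; the only delicate point is the branch bookkeeping. One could instead compute directly,
$$V_0(\lambda\bold{z})=V(\bold{z})-\sum_{k=1}^n\left(z_k\frac{\partial V}{\partial z_k}\right)(\log\lambda+\log z_k)
=V_0(\bold{z})-(\log\lambda)\sum_{k=1}^n z_k\frac{\partial V}{\partial z_k},$$
using $\sum_k z_k\frac{\partial V}{\partial z_k}=0$ by Euler's identity for the degree-$0$ function $V$; but then one has to check that replacing $\log(\lambda z_k)$ by $\log\lambda+\log z_k$, valid only modulo $2\pi i$, does not change $V_0$ modulo $4\pi^2$. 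Routing the proof through Lemma~\ref{lem1} avoids this, since the branch ambiguity has already been handled there (and in Lemma~\ref{branch}).
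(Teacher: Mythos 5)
Your proof is correct and follows essentially the same route as the paper: the equations of $\mathcal{H}$ (and the essentiality condition) depend only on ratios of the variables, so they are invariant under the scaling $\bold{z}\mapsto\lambda\bold{z}$, and the congruence then follows from Lemma~\ref{lem1} applied to the path $t\mapsto\mu(t)\bold{z}$ joining $\bold{z}$ to $\lambda\bold{z}$ inside $\mathcal{S}_j$. Your explicit treatment of the path-component issue and your remark on why the direct computation of $V_0(\lambda\bold{z})$ is better avoided are slightly more careful than the paper's one-line proof, but the substance is identical.
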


\begin{proof} The equations in $\mathcal{H}$ are products of the following terms
$$\exp\left(z_l\frac{\partial\li(z_l/z_m)}{\partial z_l}\right)=\left(1-\frac{z_l}{z_m}\right)^{-1} \text{ and }
\exp\left(z_m\frac{\partial\li(z_l/z_m)}{\partial z_m}\right)=\left(1-\frac{z_l}{z_m}\right),$$
which are represented only with ratios of the variables. This proves the first statement.

The second statement comes from Lemma \ref{lem1} by choosing a path from ${\bold z}$ to $\lambda{\bold z}$.

\end{proof}

\section{Octahedral triangulation of $\mathbb{S}^3\backslash (L\cup\{\text{two points}\})$}\label{sec3}

In this section, we describe an ideal triangulation of $\mathbb{S}^3\backslash (L\cup\{\text{two points}\})$.
We remark that this triangulation was already appeared in many different places 
because it naturally came from the link diagram. 
(For example, see Section 3 of \cite{Weeks05}.)
It was also appeared in Section 2.1 of \cite{Cho13b} and we named it (uncollapsed) Yokota triangulation.

To obtain the triangulation, we place an octahedron ${\rm A}_k{\rm B}_k{\rm C}_k{\rm D}_k{\rm E}_k{\rm F}_k$ 
on each crossing $k$ as in Figure \ref{twistocta} and twist it by identifying edges ${\rm B}_k{\rm F}_k$ to ${\rm D}_k{\rm F}_k$ and
${\rm A}_k{\rm E}_k$ to ${\rm C}_k{\rm E}_k$ respectively. The edges ${\rm A}_k{\rm B}_k$, ${\rm B}_k{\rm C}_k$,
${\rm C}_k{\rm D}_k$ and ${\rm D}_k{\rm A}_k$ are called {\it horizontal edges} and we sometimes express these edges
in the diagram as arcs around the crossing in the left hand side of Figure \ref{twistocta}.

\begin{figure}[h]
	\centering
\includegraphics[scale=0.9]{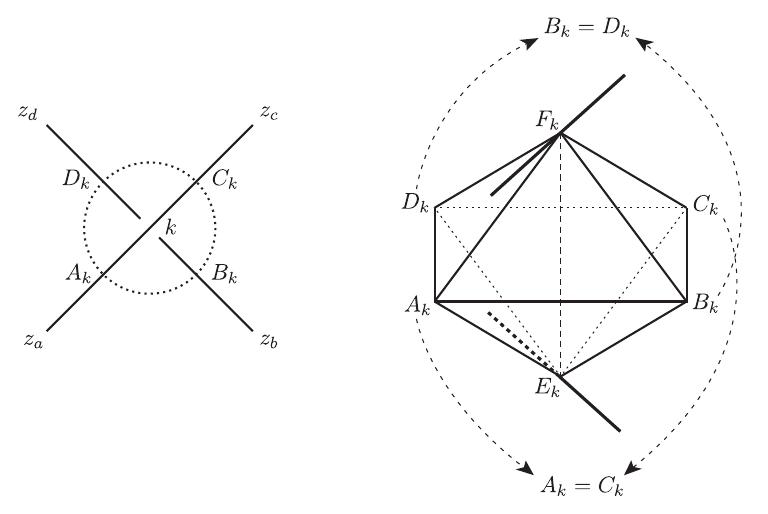}
 \caption{Octahedron on the crossing $k$}\label{twistocta}
\end{figure}

Then we glue faces of the octahedra following the sides of the diagram. 
Specifically, there are three gluing patterns as in Figure \ref{glue pattern}.
In each cases (a), (b) and (c), we identify the faces
$\triangle{\rm A}_{k}{\rm B}_{k}{\rm E}_{k}\cup\triangle{\rm C}_{k}{\rm B}_{k}{\rm E}_{k}$ to
$\triangle{\rm C}_{k+1}{\rm D}_{k+1}{\rm F}_{k+1}\cup\triangle{\rm C}_{k+1}{\rm B}_{k+1}{\rm F}_{k+1}$,
$\triangle{\rm B}_{k}{\rm C}_{k}{\rm F}_{k}\cup\triangle{\rm D}_{k}{\rm C}_{k}{\rm F}_{k}$ to
$\triangle{\rm D}_{k+1}{\rm C}_{k+1}{\rm F}_{k+1}\cup\triangle{\rm B}_{k+1}{\rm C}_{k+1}{\rm F}_{k+1}$
and
$\triangle{\rm A}_{k}{\rm B}_{k}{\rm E}_{k}\cup\triangle{\rm C}_{k}{\rm B}_{k}{\rm E}_{k}$ to
$\triangle{\rm C}_{k+1}{\rm B}_{k+1}{\rm E}_{k+1}\cup\triangle{\rm A}_{k+1}{\rm B}_{k+1}{\rm E}_{k+1}$
respectively.

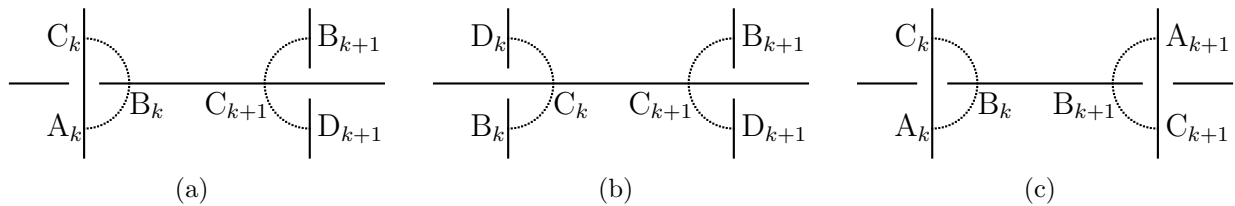
\begin{figure}[h]
\centering
  \subfigure[]
  {\begin{picture}(5,2)\thicklines
   \put(1,1){\arc[5](0,-0.6){180}}
   \put(4,1){\arc[5](0,0.6){180}}
   \put(1.2,1){\line(1,0){3.8}}
   \put(1,2){\line(0,-1){2}}
   \put(4,0){\line(0,1){0.8}}
   \put(4,2){\line(0,-1){0.8}}
   \put(0.8,1){\line(-1,0){0.8}}
   \put(0.5,0.3){${\rm A}_k$}
   \put(1.6,0.6){${\rm B}_k$}
   \put(0.5,1.5){${\rm C}_k$}
   \put(4.1,0.3){${\rm D}_{k+1}$}
   \put(2.6,0.6){${\rm C}_{k+1}$}
   \put(4.1,1.5){${\rm B}_{k+1}$}
  \end{picture}}\hspace{0.5cm}
  \subfigure[]
  {\begin{picture}(5,2)\thicklines
   \put(1,1){\arc[5](0,-0.6){180}}
   \put(4,1){\arc[5](0,0.6){180}}
   \put(5,1){\line(-1,0){5}}
   \put(1,2){\line(0,-1){0.8}}
   \put(1,0){\line(0,1){0.8}}
   \put(4,2){\line(0,-1){0.8}}
   \put(4,0){\line(0,1){0.8}}
   \put(0.5,0.3){${\rm B}_k$}
   \put(1.6,0.6){${\rm C}_k$}
   \put(0.5,1.5){${\rm D}_k$}
   \put(4.1,0.3){${\rm D}_{k+1}$}
   \put(2.6,0.6){${\rm C}_{k+1}$}
   \put(4.1,1.5){${\rm B}_{k+1}$}
  \end{picture}}\hspace{0.5cm}
  \subfigure[]
  {\begin{picture}(5,2)\thicklines
   \put(1,1){\arc[5](0,-0.6){180}}
   \put(4,1){\arc[5](0,0.6){180}}
   \put(4.2,1){\line(1,0){0.8}}
   \put(1,2){\line(0,-1){2}}
   \put(0.8,1){\line(-1,0){0.8}}
   \put(4,2){\line(0,-1){2}}
   \put(1.2,1){\line(1,0){2.6}}
   \put(0.5,0.3){${\rm A}_k$}
   \put(1.6,0.6){${\rm B}_k$}
   \put(0.5,1.5){${\rm C}_k$}
   \put(4.1,0.3){${\rm C}_{k+1}$}
   \put(2.6,0.6){${\rm B}_{k+1}$}
   \put(4.1,1.5){${\rm A}_{k+1}$}
  \end{picture}}
  \caption{Three gluing patterns}\label{glue pattern}
\end{figure} 

Note that this gluing process identifies vertices $\{{\rm A}_k, {\rm C}_k\}$ to one point, denoted by $-\infty$,
and $\{{\rm B}_k, {\rm D}_k\}$ to another point, denoted by $\infty$, and finally $\{{\rm E}_k, {\rm F}_k\}$ to
the other points, denoted by ${\rm P}_j$ where $j=1,\ldots,s$ and $s$ is the number of the components of the link $L$. 
The regular neighborhoods of $-\infty$ and $\infty$ are 3-balls and that of $\cup_{j=1}^s P_j$ is
a tubular neighborhood of the link $L$. 
Therefore, if we remove the vertices ${\rm P}_1,\ldots,{\rm P}_s$ from the gluing, 
then we obtain a triangulation of $\mathbb{S}^3\backslash L$, denoted by $\mathcal{T}$.
On the other hand, if we remove all the vertices of the gluing, 
the result becomes an ideal triangulation of $\mathbb{S}^3\backslash (L\cup\{\pm\infty\})$.
We call this ideal triangulation {\it octahedral triangulation} and denote it by $\mathcal{T}'$.

Let $M=\mathbb{S}^3\backslash L$ and $M'=\mathbb{S}^3\backslash (L\cup\{\pm\infty\})$.
Then there exists a continuous deformation of the developing maps from 
$\widetilde{M}\longrightarrow\mathbb{H}^3$ to 
$\widetilde{M'}\longrightarrow\mathbb{H}^3$,
called Thurston's spinning construction. Section 3 of \cite{Tillmann13} explains this construction 
for closed manifolds, but it can be applied to our triangulation $\mathcal{T}$ by fixing ideal points ${\rm P}_1,\ldots,{\rm P}_s$
and sending points $\pm\infty$ to $\partial\overline{\mathbb{H}^3}=\mathbb{CP}^1$.
Therefore, the parameter space of $\mathcal{T}'$ in \cite{Tillmann13} determines the complex volume of $M$.
(We will apply Zickert's formula of \cite{Zickert09} to $\mathcal{T}'$ for calculating the complex volumes of $M$. 
See Section \ref{sec4} for details.)

To describe the parameter space of the octahedral triangulation $\mathcal{T}'$, we divide each ideal octahedron 
${\rm A}_k{\rm B}_k{\rm C}_k{\rm D}_k{\rm E}_k{\rm F}_k$ into four ideal tetrahedra
${\rm A}_k{\rm B}_k{\rm E}_k{\rm F}_k$, ${\rm B}_k{\rm C}_k{\rm E}_k{\rm F}_k$,
${\rm C}_k{\rm D}_k{\rm E}_k{\rm F}_k$ and ${\rm D}_k{\rm A}_k{\rm E}_k{\rm F}_k$.
When $z_a$, $z_b$, $z_c$ and $z_d$ are assigned 
to the sides around the octahedron as in Figure \ref{parameterizing},
we parametrize each tetrahedra by assigning {\it shape parameters} 
$\frac{z_b}{z_a}$, $\frac{z_c}{z_b}$, $\frac{z_d}{z_c}$ and $\frac{z_a}{z_d}$ to the horizontal edges 
${\rm A}_k{\rm B}_k$, ${\rm B}_k{\rm C}_k$, ${\rm C}_k{\rm D}_k$ and ${\rm D}_k{\rm A}_k$ respectively.

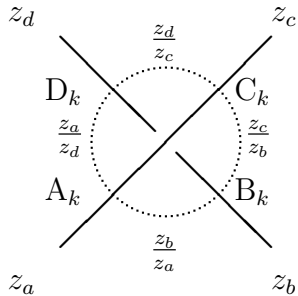
\begin{figure}[h]\centering
\begin{picture}(6,4)  
  \setlength{\unitlength}{0.7cm}\thicklines
        \put(4,3){\arc[5](1,1){360}}
    \put(6,5){\line(-1,-1){4}}
    \put(2,5){\line(1,-1){1.8}}
    \put(4.2,2.8){\line(1,-1){1.8}}
    \put(3.7,0.8){$\frac{z_b}{z_a}$}
    \put(5.5,3){$\frac{z_c}{z_b}$}
    \put(3.7,4.8){$\frac{z_d}{z_c}$}
    \put(1.9,3){$\frac{z_a}{z_d}$}
    \put(1,5.3){$z_d$}
    \put(6,5.3){$z_c$}
    \put(1,0.2){$z_a$}
    \put(6,0.2){$z_b$}
    \put(1.7,1.9){${\rm A}_k$}
    \put(5.3,1.9){${\rm B}_k$}
    \put(5.3,3.8){${\rm C}_k$}
    \put(1.7,3.8){${\rm D}_k$}
  \end{picture}\caption{Parametrizing tetrahedra}\label{parameterizing}
  \end{figure}

Note that if we assign a shape parameter $u\in\mathbb{C}\backslash\{0,1\}$ to an edge of an ideal tetrahedron,
then the other edges are also parametrized by $u, u':=\frac{1}{1-u}$ and $u'':=1-\frac{1}{u}$
as in Figure \ref{pic10}.

\begin{figure}[h]
\begin{center}
  {\setlength{\unitlength}{0.4cm}
  \begin{picture}(12,10)\thicklines
   \put(1,1){\line(1,0){8}}
   \put(1,1){\line(1,2){4}}
   \put(5,9){\line(1,-2){4}}
   \put(9,1){\line(1,2){2}}
   \put(5,9){\line(3,-2){6}}
   \dashline{0.5}(1,1)(11,5)
   \put(5,0){$u$}
   \put(8,7){$u$}
   \put(2.2,5){$u'$}
   \put(10.5,3){$u'$}
   \put(5,3){$u''$}
   \put(7,5){$u''$}
  \end{picture}}
  \caption{Parametrization of an ideal tetrahedron with a parameter $u$}\label{pic10}
\end{center}
\end{figure}
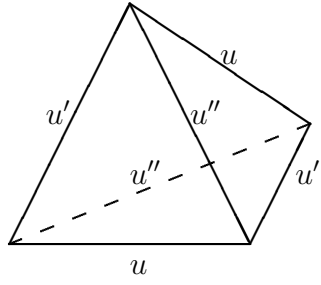

For a given ideal triangulation of $\mathbb{S}^3\backslash L$
or $\mathbb{S}^3\backslash (L\cup\{\pm\infty\})$,
we require two conditions to obtain the complete hyperbolic structure;
the product of shape parameters on an edge is one for all edges, and the holonomies
of meridian and longitude act as translations on the cusp.
The former are called {\it Thurston's gluing equations} and the latter {\it completeness conditions}.
Note that these conditions are expressed as equations of shape parameters.
The whole set of these equations are called {\it the hyperbolicity equations}.
The works of Luo, Tillmann and others in \cite{Tillmann13} and \cite{Tillmann11} use only Thurston's gluing equations, but, in this article,
we also require completeness conditions. Therefore, if $\bold{z}$ is a solution of the hyperbolicity equations,
then the induced representation $\rho_{\bold{z}}:\pi_1(\mathbb{S}^3\backslash L)\longrightarrow{\rm PSL}(2,\mathbb{C})$
is boundary-parabolic.

The rest of this section is devoted to the proof of Proposition \ref{pro1}. 
Note that Proposition \ref{pro1} was already appeared and proved in \cite{Yokota10} in a slightly different way.

\begin{proof}[Proof of Proposition \ref{pro1}]
For each octahedron in Figure \ref{twistocta} of the octahedral triangulation, let $\mathcal{A}$ be the set of horizontal edges
${\rm A}_k{\rm B}_k$, ${\rm B}_k{\rm C}_k$, ${\rm C}_k{\rm D}_k$ and ${\rm D}_k{\rm A}_k$ of all crossings $k$. 
Let $\mathcal{B}$ be the set of edges ${\rm B}_k{\rm F}_k$, ${\rm D}_k{\rm F}_k$,
${\rm A}_k{\rm E}_k$, ${\rm C}_k{\rm E}_k$ of all crossings and other edges glued to them. Let $\mathcal{C}$ be
the set of edges ${\rm E}_k{\rm F}_k$ of all crossings and let $\mathcal{D}$ be set of the other edges in the triangulation. Note that
if the diagram $D$ is alternating, then $\mathcal{D}=\emptyset$.

The rule of assigning shape parameters to horizontal edges 
makes the edge conditions of $\mathcal{A}$ and $\mathcal{C}$ hold trivially.

\begin{lem}\label{lem31}
The set of equations $\mathcal{H}$ consists of the completeness conditions along the meridian and 
Thurston's gluing equations of the elements in $\mathcal{D}$.
\end{lem}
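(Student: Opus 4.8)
The plan is to prove that the two sets of equations coincide by examining $\mathcal{H}$ one side at a time. Fix a side $z_k$ of $D$, let $c_1$ and $c_2$ be the two crossings it joins, and consider the single equation $\exp\!\left(z_k\frac{\partial V}{\partial z_k}\right)=1$. Since $z_k$ occurs only in the potential functions of $c_1$ and $c_2$, I would first apply the elementary identities $z\frac{\partial}{\partial z}\li(z/w)=-\log(1-z/w)$ and $w\frac{\partial}{\partial w}\li(z/w)=\log(1-z/w)$ to express $\exp\!\left(z_k\frac{\partial V}{\partial z_k}\right)$ as a product of at most four factors $(1-z_a/z_b)^{\pm1}$, two contributed by $c_1$ and two by $c_2$. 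Using the labelling of Figure \ref{parameterizing} and the relations $1-u=(u')^{-1}$, $1-u^{-1}=u''$ among the shape parameters $u,\ u'=\frac1{1-u},\ u''=1-\frac1u$ of an ideal tetrahedron, each factor becomes, up to a common exponent, the shape parameter on one of the two non-horizontal edges shared by the two tetrahedra of the octahedron at $c_1$ (or $c_2$) that flank the corner of the crossing adjacent to $z_k$. In this way the equation for $z_k$ becomes a monomial relation in shape parameters, and the exponent bookkeeping is uniform: the two factors coming from a crossing at which the strand through $z_k$ goes over appear with exponent $-1$, those from a crossing at which it goes under with exponent $+1$. Since the rule for assigning shape parameters to the horizontal edges already makes the edge conditions of $\mathcal{A}$ and $\mathcal{C}$ trivial, this reduces the lemma to recognising these monomials.

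The core of the argument --- and the step I expect to be the main obstacle --- is that geometric recognition, which I would carry out by a case analysis over the three gluing patterns (a), (b), (c) of Figure \ref{glue pattern} at the two ends of $z_k$. In each configuration one draws the two truncated, twisted octahedra of Figure \ref{twistocta} glued along the face dual to $z_k$ and inspects the link of the relevant edge or ideal point. If the strand carrying $z_k$ runs over at both $c_1$ and $c_2$, or under at both --- equivalently, $z_k$ lies in the interior of an overpass, respectively an underpass, of length at least two, which is exactly the situation making $\mathcal{D}\neq\emptyset$ --- then the four tetrahedra above are precisely the four tetrahedra surrounding a single edge of $\mathcal{D}$, the two exponents $-1$ (respectively $+1$) cancel uniformly, and the equation for $z_k$ is exactly Thurston's gluing equation of that edge. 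In the complementary case the strand goes over at one of $c_1,c_2$ and under at the other, the surviving expression is a ratio of two shape parameters by two others, and the four tetrahedra fit around a normal loop on the cusp torus of the component carrying $z_k$ that is isotopic to the meridian; the equation for $z_k$ is then a completeness condition along that meridian.

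It remains to check the correspondence is onto, so that the inclusion of sets becomes an equality. Every edge of $\mathcal{D}$ occurs: by definition $\mathcal{D}$ consists of the vertical edges not absorbed into $\mathcal{B}$, and each such edge is produced by the two octahedra at the ends of an interior side of an over- or underpass, so its gluing equation lies in $\mathcal{H}$; and the sides lying at the ends of over- and underpasses along a fixed component of $L$ yield, via the second case above, all of the completeness conditions along that component's meridian. Assembling the cases shows that $\mathcal{H}$ is exactly the set of completeness conditions along the meridians together with Thurston's gluing equations of the edges in $\mathcal{D}$. All of the difficulty is concentrated in keeping the pictures of the glued, twisted octahedra coherent through the case analysis of Figure \ref{glue pattern}; the rest is the routine dilogarithm and shape-parameter bookkeeping of the first step.
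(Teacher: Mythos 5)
Your proposal follows essentially the same route as the paper: a case analysis over the three gluing patterns of Figure \ref{glue pattern}, writing $\exp\!\left(z_l\frac{\partial V}{\partial z_l}\right)$ as a product of four factors $(1-z_\cdot/z_\cdot)^{\pm1}$ coming from the four corners adjacent to the side, and matching it via the $u,u',u''$ relations with the meridian completeness condition in the alternating (over-to-under) case and with Thurston's gluing equation of the single $\mathcal{D}$-edge shared by the two octahedra in the over-over and under-under cases. One small correction to your bookkeeping: the two factors contributed by a single crossing do not carry a common exponent determined by over/under --- the four corners of a crossing alternate in sign, so each crossing contributes one factor with exponent $+1$ and one with $-1$, as in $\left(1-\frac{z_l}{z_a}\right)^{-1}\left(1-\frac{z_b}{z_l}\right)\left(1-\frac{z_c}{z_l}\right)^{-1}\left(1-\frac{z_l}{z_d}\right)=1$ for the alternating case --- but this does not affect the identification, since the alternation of primed and double-primed shape parameters around the relevant edge or meridian reproduces exactly this sign pattern.
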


\begin{proof}
Consider the following three cases in Figure \ref{cases}.
We call the case (a) {\it alternating gluing} and the other cases (b) and (c) {\it non-alternating gluings}.
Note that elements of $\mathcal{D}$ appear only in non-alternating gluings. 
(Specifically ${\rm C}_k{\rm F}_k={\rm C}_{k+1}{\rm F}_{k+1}\in\mathcal{D}$ in the case (b) and
${\rm B}_k{\rm E}_k={\rm B}_{k+1}{\rm E}_{k+1}\in\mathcal{D}$ in the case (c).)

\begin{figure}[h]
\centering
  \subfigure[]
  {\begin{picture}(5,2)\thicklines
   \put(1,1){\arc[5](0,-0.6){180}}
   \put(4,1){\arc[5](0,0.6){180}}
   \put(1.2,1){\line(1,0){3.8}}
   \put(1,2){\line(0,-1){2}}
   \put(4,0){\line(0,1){0.8}}
   \put(4,2){\line(0,-1){0.8}}
   \put(0.8,1){\line(-1,0){0.8}}
   \put(0.5,0.3){${\rm A}_k$}
   \put(1.6,0.6){${\rm B}_k$}
   \put(0.5,1.5){${\rm C}_k$}
   \put(4.1,0.3){${\rm D}_{k+1}$}
   \put(2.6,0.6){${\rm C}_{k+1}$}
   \put(4.1,1.5){${\rm B}_{k+1}$}
   \put(2.5,1.2){$z_l$}
   \put(0.6,0){$z_a$}
   \put(4.1,0){$z_b$}
   \put(4.1,2){$z_c$}
   \put(0.6,2){$z_d$}
  \end{picture}}\hspace{0.5cm}
  \subfigure[]
  {\begin{picture}(5,2)\thicklines
   \put(1,1){\arc[5](0,-0.6){180}}
   \put(4,1){\arc[5](0,0.6){180}}
   \put(5,1){\line(-1,0){5}}
   \put(1,2){\line(0,-1){0.8}}
   \put(1,0){\line(0,1){0.8}}
   \put(4,2){\line(0,-1){0.8}}
   \put(4,0){\line(0,1){0.8}}
   \put(0.5,0.3){${\rm B}_k$}
   \put(1.6,0.6){${\rm C}_k$}
   \put(0.5,1.5){${\rm D}_k$}
   \put(4.1,0.3){${\rm D}_{k+1}$}
   \put(2.6,0.6){${\rm C}_{k+1}$}
   \put(4.1,1.5){${\rm B}_{k+1}$}
   \put(2.5,1.2){$z_l$}
   \put(0.6,0){$z_a$}
   \put(4.1,0){$z_b$}
   \put(4.1,2){$z_c$}
   \put(0.6,2){$z_d$}
  \end{picture}}\hspace{0.5cm}
  \subfigure[]
  {\begin{picture}(5,2)\thicklines
   \put(1,1){\arc[5](0,-0.6){180}}
   \put(4,1){\arc[5](0,0.6){180}}
   \put(4.2,1){\line(1,0){0.8}}
   \put(1,2){\line(0,-1){2}}
   \put(0.8,1){\line(-1,0){0.8}}
   \put(4,2){\line(0,-1){2}}
   \put(1.2,1){\line(1,0){2.6}}
   \put(0.5,0.3){${\rm A}_k$}
   \put(1.6,0.6){${\rm B}_k$}
   \put(0.5,1.5){${\rm C}_k$}
   \put(4.1,0.3){${\rm C}_{k+1}$}
   \put(2.6,0.6){${\rm B}_{k+1}$}
   \put(4.1,1.5){${\rm A}_{k+1}$}
   \put(2.5,1.2){$z_l$}
   \put(0.6,0){$z_a$}
   \put(4.1,0){$z_b$}
   \put(4.1,2){$z_c$}
   \put(0.6,2){$z_d$}
  \end{picture}}
  \caption{Three cases of gluings}\label{cases}
\end{figure}
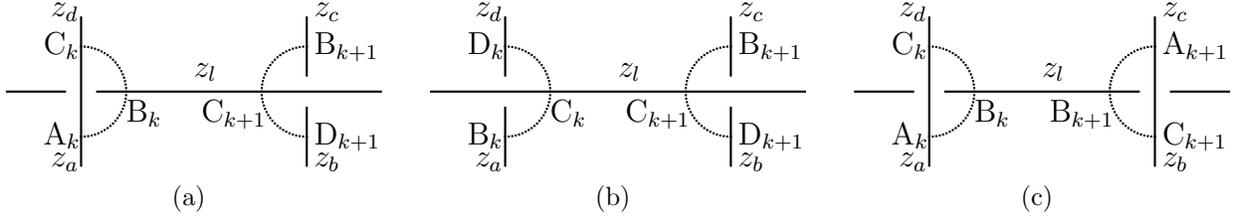 

The variables $z_a, z_b, z_c, z_d$ and $z_l$ are assigned to each sides in Figure \ref{cases}. 
The potential function $V^{(a)}$ of the four corners in Figure \ref{cases}(a) is defined by
$$V^{(a)}=\li\left(\frac{z_l}{z_a}\right)
+\li\left(\frac{z_b}{z_l}\right)-\li\left(\frac{z_c}{z_l}\right)-\li\left(\frac{z_l}{z_d}\right),$$
and it induces the following equation
\begin{eqnarray}
\exp\left(z_l\frac{\partial V}{\partial z_l}\right)&=&\exp\left(z_l\frac{\partial V^{(a)}}{\partial z_l}\right)\nonumber\\
&=&\left(1-\frac{z_l}{z_a}\right)^{-1}\left(1-\frac{z_b}{z_l}\right)
\left(1-\frac{z_c}{z_l}\right)^{-1}\left(1-\frac{z_l}{z_d}\right)=1\in\mathcal{H}.\label{a}
\end{eqnarray}

On the other hand, the cusp along the side $z_l$ in Figure \ref{cases}(a)
can be visualized by the annulus in Figure \ref{Figure a}.
In Figure \ref{Figure a}, $a_k$, $b_k$, $c_k$, $b_{k+1}$, $c_{k+1}$, $d_{k+1}$ are the points of the cusp, which lie on
the edges ${\rm A}_k{\rm E}_k$, ${\rm B}_k{\rm E}_k$, ${\rm C}_k{\rm E}_k$, 
${\rm B}_{k+1}{\rm F}_{k+1}$, ${\rm C}_{k+1}{\rm F}_{k+1}$, ${\rm D}_{k+1}{\rm F}_{k+1}$ respectively,
and $m$ is the meridian of the cusp. 

\begin{figure}[h]
\centering
\includegraphics[scale=0.85]{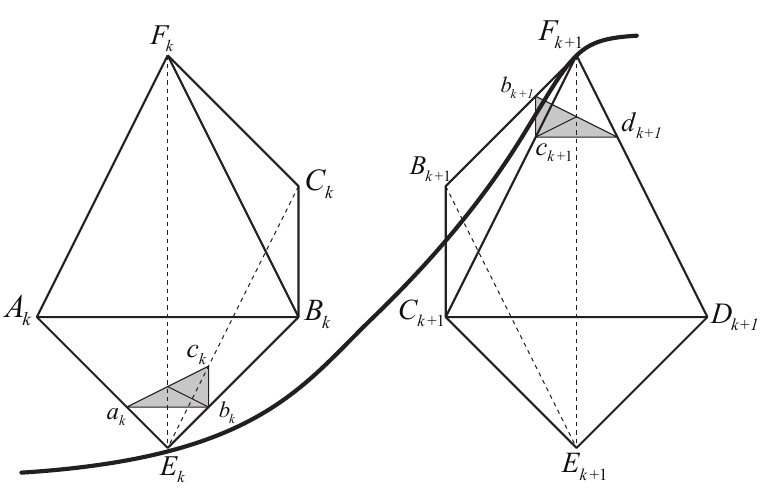}
\includegraphics[scale=0.85]{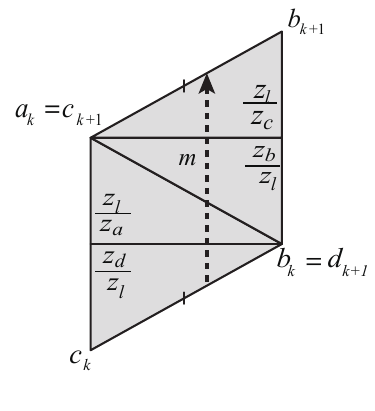}
 \caption{Cusp diagram of Figure \ref{cases}(a)}\label{Figure a}
\end{figure}

The completeness condition along $m$ in Figure \ref{Figure a} becomes
$$\left\{\left(\frac{z_d}{z_l}\right)''\left(\frac{z_l}{z_a}\right)'\right\}^{-1}\left(\frac{z_b}{z_l}\right)'\left(\frac{z_l}{z_c}\right)''
=\left(1-\frac{z_l}{z_d}\right)^{-1}\left(1-\frac{z_l}{z_a}\right)\left(1-\frac{z_b}{z_l}\right)^{-1}
\left(1-\frac{z_c}{z_l}\right)=1,
$$
which is equivalent to (\ref{a}).

The potential function $V^{(b)}$ of the four corners in Figure \ref{cases}(b) is defined by
$$V^{(b)}=-\li\left(\frac{z_a}{z_l}\right)
+\li\left(\frac{z_b}{z_l}\right)-\li\left(\frac{z_c}{z_l}\right)+\li\left(\frac{z_d}{z_l}\right),$$
and it induces the following equation
\begin{eqnarray}
\exp\left(z_l\frac{\partial V}{\partial z_l}\right)&=&\exp\left(z_l\frac{\partial V^{(b)}}{\partial z_l}\right)\nonumber\\
&=&\left(1-\frac{z_a}{z_l}\right)^{-1}\left(1-\frac{z_b}{z_l}\right)
\left(1-\frac{z_c}{z_l}\right)^{-1}\left(1-\frac{z_d}{z_l}\right)=1\in\mathcal{H}.\label{b}
\end{eqnarray}

On the other hand, the cusp along the side $z_l$ in Figure \ref{cases}(b)
can be visualized by Figure \ref{Figure b}.
In Figure \ref{Figure b}, $b_k$, $c_k$, $d_k$, $b_{k+1}$, $c_{k+1}$, $d_{k+1}$ are the points of the cusp, which lie on
the edges ${\rm B}_k{\rm F}_k$, ${\rm C}_k{\rm F}_k$, ${\rm D}_k{\rm F}_k$,
${\rm B}_{k+1}{\rm F}_{k+1}$, ${\rm C}_{k+1}{\rm F}_{k+1}$, ${\rm D}_{k+1}{\rm F}_{k+1}$ respectively,
and the edges $c_k d_k$ and $c_k b_k$ are identified to $c_{k+1} b_{k+1}$ and $c_{k+1} d_{k+1}$ respectively.

\begin{figure}[h]
\centering
\includegraphics[scale=0.82]{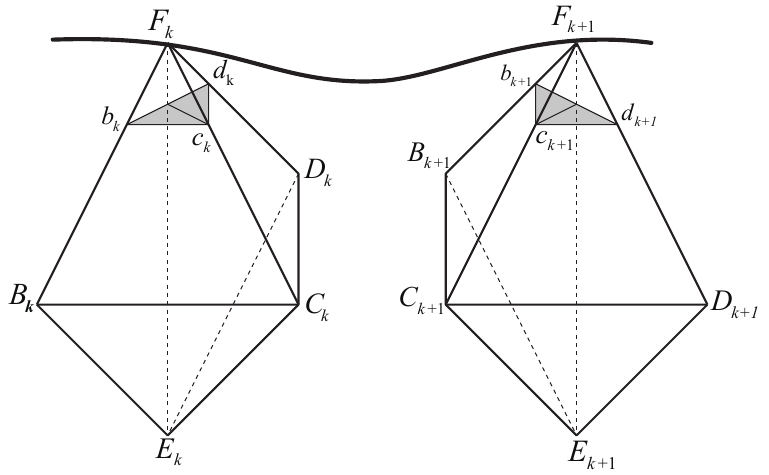}
\includegraphics[scale=0.9]{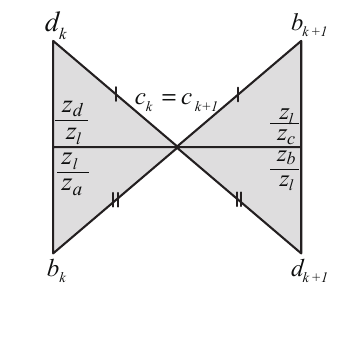}
 \caption{Cusp diagram of Figure \ref{cases}(b)}\label{Figure b}
\end{figure}

Thurston's gluing equation of the edge ${\rm C}_k{\rm F}_k={\rm C}_{k+1}{\rm F}_{k+1}\in\mathcal{D}$ (around $c_k=c_{k+1}$)
in Figure \ref{Figure b} becomes
$$\left(\frac{z_l}{z_a}\right)''\left(\frac{z_b}{z_l}\right)'\left(\frac{z_l}{z_c}\right)''\left(\frac{z_d}{z_l}\right)'
=\left(1-\frac{z_a}{z_l}\right)\left(1-\frac{z_b}{z_l}\right)^{-1}
\left(1-\frac{z_c}{z_l}\right)\left(1-\frac{z_d}{z_l}\right)^{-1}=1,$$
which is equivalent to (\ref{b}).

The potential function $V^{(c)}$ of the four corners in Figure \ref{cases}(c) is defined by
$$V^{(c)}=\li\left(\frac{z_l}{z_a}\right)
-\li\left(\frac{z_l}{z_b}\right)+\li\left(\frac{z_l}{z_c}\right)-\li\left(\frac{z_l}{z_d}\right),$$
and it induces the following equation
\begin{eqnarray}
\exp\left(z_l\frac{\partial V}{\partial z_l}\right)&=&
\exp\left(z_l\frac{\partial V^{(c)}}{\partial z_l}\right)\nonumber\\
&=&\left(1-\frac{z_l}{z_a}\right)^{-1}\left(1-\frac{z_l}{z_b}\right)
\left(1-\frac{z_l}{z_c}\right)^{-1}\left(1-\frac{z_l}{z_d}\right)=1\in\mathcal{H}.\label{c}
\end{eqnarray}

On the other hand, the cusp along the side $z_l$ in Figure \ref{cases}(c)
can be visualized by Figure \ref{Figure c}.
In Figure \ref{Figure c}, $a_k$, $b_k$, $c_k$, $a_{k+1}$, $b_{k+1}$, $c_{k+1}$ are the points of the cusp, which lie on
the edges ${\rm A}_k{\rm E}_k$, ${\rm B}_k{\rm E}_k$, ${\rm C}_k{\rm E}_k$,
${\rm A}_{k+1}{\rm E}_{k+1}$, ${\rm B}_{k+1}{\rm E}_{k+1}$, ${\rm C}_{k+1}{\rm E}_{k+1}$ respectively,
and the edges $b_k a_k$ and $b_k c_k$ are identified to $b_{k+1} c_{k+1}$ and $b_{k+1} a_{k+1}$ respectively.

\begin{figure}[h]
\includegraphics[scale=0.8]{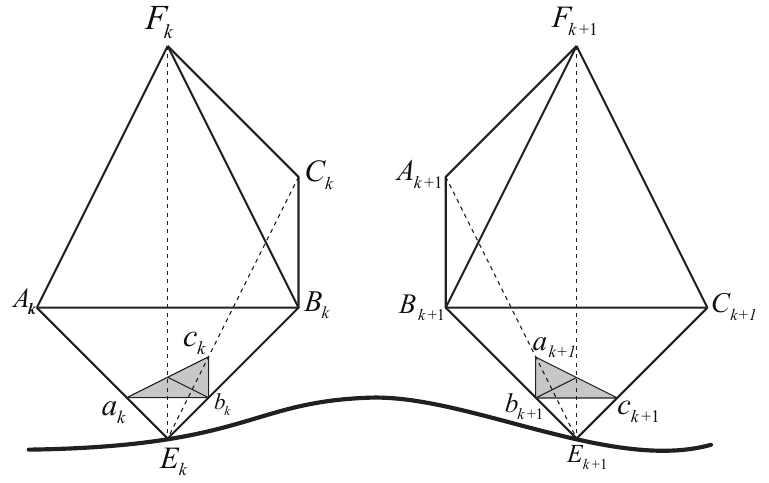}
\includegraphics[scale=0.9]{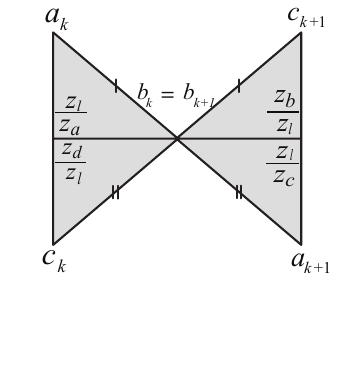}
 \caption{Cusp diagram of Figure \ref{cases}(c)}\label{Figure c}
\end{figure}

Thurston's gluing equation of the edge ${\rm B}_k{\rm E}_k={\rm B}_{k+1}{\rm E}_{k+1}\in\mathcal{D}$ (around $b_k=b_{k+1}$)
in Figure \ref{Figure c} becomes
$$\left(\frac{z_l}{z_a}\right)'\left(\frac{z_b}{z_l}\right)''\left(\frac{z_l}{z_c}\right)'\left(\frac{z_d}{z_l}\right)''
=\left(1-\frac{z_l}{z_a}\right)^{-1}\left(1-\frac{z_l}{z_b}\right)
\left(1-\frac{z_l}{z_c}\right)^{-1}\left(1-\frac{z_l}{z_d}\right)=1,$$
which is equivalent to (\ref{c}). It completes the proof of Lemma \ref{lem31}.

\end{proof}

We remark that the cusp diagram of alternating gluing becomes an annulus, but that of non-alternating gluing
eventually becomes a part of annulus. 
This comes from the cusp diagrams of the two cases in Figure \ref{nonalter1} and Figure \ref{nonalter2}.
\begin{figure}[h]
\centering
\subfigure[]{\includegraphics[scale=0.8]{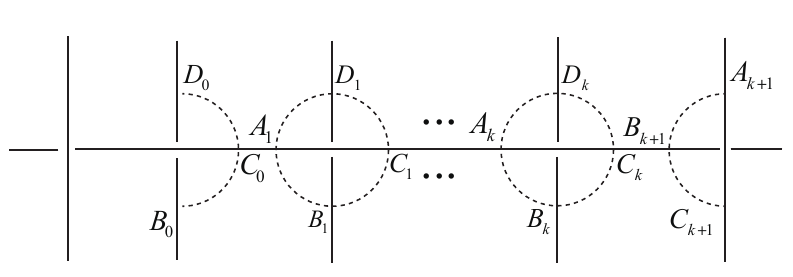}}
\subfigure[]{\includegraphics[scale=1]{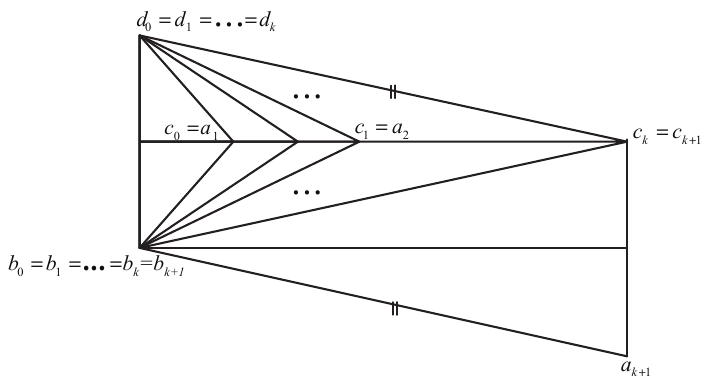}}
 \caption{First non-alternating gluing and its cusp diagram ($k\geq 2$)}\label{nonalter1}
\end{figure}

\begin{figure}[h]
\centering
\subfigure[]{\includegraphics[scale=0.8]{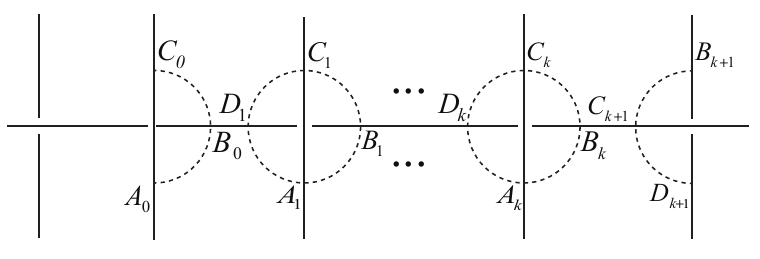}}
\subfigure[]{\includegraphics[scale=1]{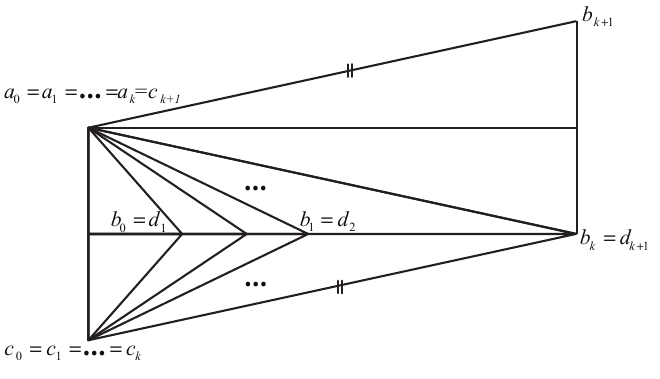}}
 \caption{Second non-alternating gluing and its cusp diagram ($k\geq 2$)}\label{nonalter2}
\end{figure}

Due to the completeness conditions in $\mathcal{H}$, the edges $d_k c_k$ and $b_k c_k$ are identified 
to $b_{k+1} a_{k+1}$ and $b_{k+1} c_{k+1}$ respectively in Figure \ref{nonalter1}(b),
and the edges $a_k b_k$ and $c_k b_k$ are identified to $c_{k+1} d_{k+1}$ and $c_{k+1}b_{k+1}$ respectively in Figure \ref{nonalter2}(b).
These identifications make the cusp diagrams topological annuli.
Furthermore, due to the Thurston's gluing equations of the edges in $\mathcal{C}\cup\mathcal{D}$,
the annuli have Euclidean structures.

To complete the proof of Proposition \ref{pro1}, we show the completeness conditions in $\mathcal{H}$ and 
Thurston's gluing equations of the the edges in 
$\mathcal{A}\cup\mathcal{C}\cup\mathcal{D}$ induce the other gluing equations of the edges in $\mathcal{B}$.

Consider the crossing $k$ in Figure \ref{edgeB1}.  
The crossing $l$ is the previous under-crossing of $k$ and the crossing $m$ is the next under-crossing.

\begin{figure}
\centering 
\includegraphics[scale=0.95]{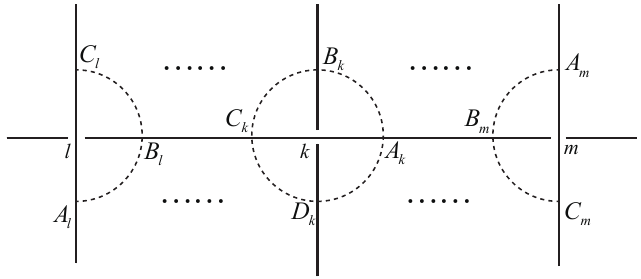}
\caption{The case of ${\rm B}_k{\rm F}_k={\rm D}_k{\rm F}_k\in\mathcal{B}$}\label{edgeB1}
\end{figure}

Thurston's gluing equation of ${\rm B}_k{\rm F}_k={\rm D}_k{\rm F}_k\in\mathcal{B}$
follows from the gluing equation around $b_k=d_k$ of the cusp diagram in Figure \ref{edgeB2}
since $c_l d_k$ and $d_k a_m$ are parallel to $a_l b_k$ and $b_k c_m$ respectively. 

\begin{figure}
\centering 
\includegraphics[scale=0.72]{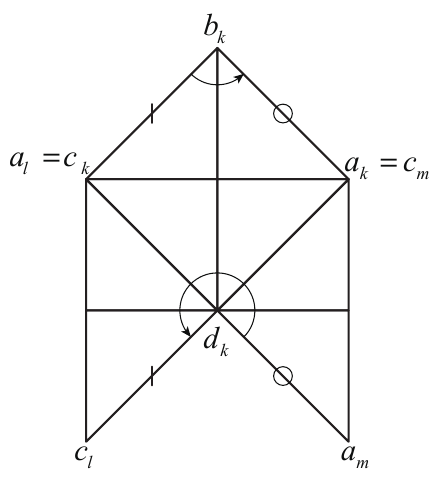}
\caption{The cusp diagram of Figure \ref{edgeB1}}\label{edgeB2}
\end{figure}

The proof of the case of ${\rm A}_k{\rm E}_k={\rm C}_k{\rm E}_k\in\mathcal{B}$ is also obtained by
considering Figure \ref{edgeB3} and following the same argument as before. 

\begin{figure}
\centering 
\includegraphics[scale=0.95]{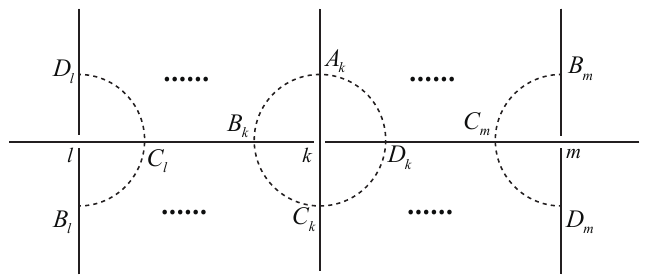}
\caption{The case of ${\rm A}_k{\rm E}_k={\rm C}_k{\rm E}_k\in\mathcal{B}$}\label{edgeB3}
\end{figure}

As a conclusion, we showed $\mathcal{H}$ induces Thurston's gluing equations of all the edges. 
The completeness conditions along the meridian in $\mathcal{H}$ and all the gluing equations together induce the completeness condition
along the longitude, so $\mathcal{H}$ induces the whole hyperbolicity equations.

\end{proof}

\section{Proof of Theorem \ref{thm1}}\label{sec4}

In this section, we always assume ${\bold z}=(z_1,\ldots,z_n)$ is a solution in $\mathcal{S}_j$ and drop the index $j$ of $r_{j,k}$ in (\ref{def_r}).

The main technique of the proof of Theorem \ref{thm1} is the extended Bloch group theory in \cite{Zickert09}.
To apply it, we first define the vertex ordering of the octahedral triangulation. In Figure \ref{twistocta}, we assign 0 and 1
to the vertices ${\rm E}_k$ and ${\rm F}_k$ respectively, 2 to the vertices ${\rm A}_k$ and ${\rm C}_k$, and
3 to the vertices ${\rm B}_k$ and ${\rm D}_k$. This assignment induces the vertex orderings of the four tetrahedra.

Note that the vertex ordering of each tetrahedron induces
the orientations of the edges and the tetrahedron. The induced orientation of the tetrahedron
can be different from the original orientation induced by the triangulation.
For example, the tetrahedra ${\rm E}_k{\rm F}_k{\rm C}_k{\rm B}_k$ and ${\rm E}_k{\rm F}_k{\rm A}_k{\rm D}_k$
in Figure \ref{twistocta}
are the cases. If the two orientations are the same, we define the sign of the tetrahedron $\sigma=1$, and if they are different,
then $\sigma=-1$.

One important property of this vertex orientation is that 
when two edges are glued together in the triangulation, 
the orientations of the two edges induced by each vertex orderings coincide.
(We call this condition {\it edge-orientation consistency}.)
Because of this property, we can apply the formula in \cite{Zickert09}.

The triangulation we are using is an ideal triangulation, so we already parametrized all ideal tetrahedra of the triangulation
by assigning shape parameters to horizontal edges in Section \ref{sec3}.
For each tetrahedron with the vertex-orientation, we define an element of the extended pre-Bloch group
$\sigma[u^{\sigma};p,q]\in\widehat{\mathcal{P}}(\mathbb{C})$, where $\sigma$ is the sign of the tetrahedron, 
$u$ is the shape parameter assigned to the edge connecting the $0$th and $1$st vertices, and $p,q$ are certain integers.

Zickert suggested a way to determine $p$ and $q$ from the developing map 
of the representation $\rho:\pi_1(M)\rightarrow{\rm PSL}(2,\mathbb{C})$ of a hyperbolic manifold $M$ in \cite{Zickert09},
and showed that
\begin{equation}\label{formula}
\widehat{L}(\sum\sigma[u^{\sigma};p,q])\equiv i(\vol(\rho)+i\,\cs(\rho))\modulo,
\end{equation}
where the summation is over all tetrahedra and 
$$\widehat{L}([u;p,q])=\li(u)-\frac{\pi^2}{6}+\frac{1}{2}q\pi i\log u+\frac{1}{2}\log(1-u)(\log u+p\pi i)$$
is a complex valued function defined on $\widehat{\mathcal{P}}(\mathbb{C})$.

Although our ideal triangulation $\mathcal{T}'$ is that of $\mathbb{S}^3\backslash (L\cup\{\pm\infty\})$, 
the formula of \cite{Zickert09} is still valid because of Thurston's spinning construction. 
Theorem 4.11 of \cite{Zickert09} already considered our case and 
the developing map of the representation is the one obtained by Thurston's spinning construction. 
(The map sends $\pm\infty$ to ideal points corresponding to the trivial ends.)

To determine $p, q$ of $\sigma[u^{\sigma};p,q]$ of a tetrahedron with vertex orientation, we assign certain complex numbers 
$g_{jk}$ to the edge connecting the $j$th and $k$th vertices, where $j,k\in\{0,1,2,3\}$ and $j<k$.
We assume $g_{jk}$ satisfies the property that 
if two edges are glued together in the triangulation, then the assigned $g_{jk}$'s of the edges coincide.
We do not use the exact values of $g_{jk}$ in this article, but remark that there is an explicit method in \cite{Zickert09} 
for calculating these numbers using the developing map. With the given numbers $g_{jk}$, we can calculate $p,q$ using the following equations,
which appeared as equation (3.5) in \cite{Zickert09}:
  \begin{eqnarray}
      \left\{\begin{array}{ll}
      p\pi i =-\log u^{\sigma}+\log g_{03}+\log g_{12}-\log g_{02}-\log g_{13},\\
      q\pi i =\log(1-u^{\sigma})+\log g_{02}+\log g_{13}-\log g_{23}-\log g_{01}.
      \end{array}\right.\label{pq}
  \end{eqnarray}

To avoid confusion, we use variables $\alpha_m, \beta_m, \gamma_m, \delta_m$ instead of $g_{jk}$.
We assign $\alpha_m$ and $\beta_m$ to non-horizontal edges as in Figure \ref{labeling}, where $m=a,b,c,d$. 
We also assign $\gamma_l$ to horizontal edges and $\delta_k$ to the edge ${\rm E}_k{\rm F}_k$ inside the octahedron.
Although we have $\alpha_a=\alpha_c$ and $\beta_b=\beta_d$, we use $\alpha_a$ for the tetrahedron
${\rm E}_k{\rm F}_k{\rm A}_k{\rm B}_k$ and ${\rm E}_k{\rm F}_k{\rm A}_k{\rm D}_k$, $\alpha_c$ for
${\rm E}_k{\rm F}_k{\rm C}_k{\rm B}_k$ and ${\rm E}_k{\rm F}_k{\rm C}_k{\rm D}_k$, $\beta_b$ for
${\rm E}_k{\rm F}_k{\rm A}_k{\rm B}_k$ and ${\rm E}_k{\rm F}_k{\rm C}_k{\rm B}_k$, $\beta_d$ for
${\rm E}_k{\rm F}_k{\rm C}_k{\rm D}_k$ and ${\rm E}_k{\rm F}_k{\rm A}_k{\rm D}_k$.
We assign vertex orderings of the tetrahedra in Figure \ref{labeling} by assigning 0 to ${\rm E}_k$,
1 to ${\rm F}_k$, 2 to ${\rm A}_k$ and ${\rm C}_k$, and 3 to ${\rm B}_k$ and ${\rm D}_k$.
Then the orientation of the octahedral triangulation induced by this ordering satisfies the edge-orientation consistency.

\begin{figure}[h]
\centering
\includegraphics[scale=1]{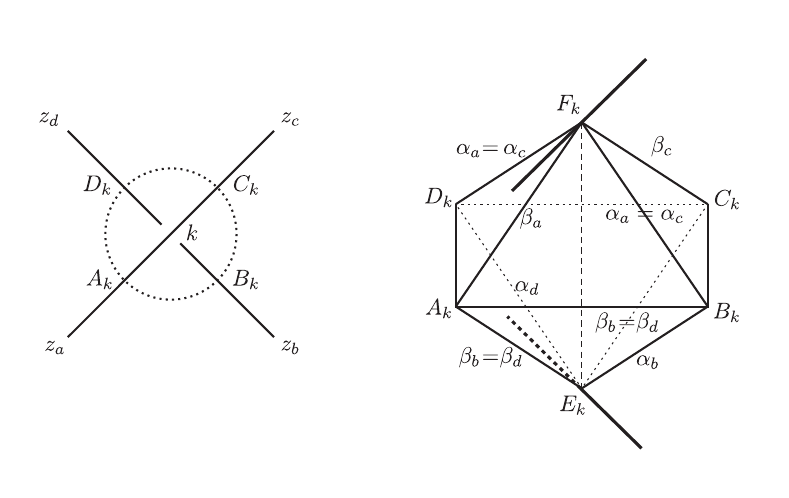}
 \caption{Labelings of non-horizontal edges}\label{labeling}
\end{figure}

\begin{obs}\label{obs} For a fixed link diagram with the octahedral triangulation, we have
$$\log\alpha_l-\log\beta_l\equiv\log z_l +A~~({\rm mod}~\pi i),$$
for all $l=1,\ldots,n$, where $n$ is the number of sides of the diagram and 
$A$ is a complex constant number independent of $l$.
\end{obs}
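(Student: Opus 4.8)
The plan is to establish the identity by examining, crossing by crossing, how the edge labels $g_{jk}$ attached to the non-horizontal edges relate to the side variables $z_l$. The key observation is that in the developing map built from a solution $\bold z\in\mathcal S_j$, the shape parameter assigned to the horizontal edge ${\rm A}_k{\rm B}_k$ equals $z_b/z_a$, and each shape parameter of an ideal tetrahedron is expressible as a cross-ratio of the four edge labels at its vertices. Concretely, for the tetrahedron ${\rm E}_k{\rm F}_k{\rm A}_k{\rm B}_k$ the shape parameter $u^{\sigma}$ on the $01$-edge is (up to the sign $\sigma$ and an overall normalization) a ratio of the $g_{jk}$'s of the form $\frac{g_{03}g_{12}}{g_{02}g_{13}}$, which in our notation is a product/quotient of $\alpha_a,\beta_b$ (the non-horizontal labels), $\gamma_l$ (a horizontal label) and $\delta_k$. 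Comparing this cross-ratio expression with the explicit shape-parameter assignment $z_b/z_a$ from Section \ref{sec3} gives a relation, modulo $\pi i$ after taking logarithms, between $\log\alpha_a-\log\beta_b$ (or the appropriate combination) and $\log z_a$, $\log z_b$.

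First I would fix one crossing and write down, for each of the four tetrahedra of its octahedron, the two equations (\ref{pq}) relating $p,q$ to the $g_{jk}$'s and to $u^{\sigma}$; since $u^{\sigma}$ is a known ratio like $z_b/z_a$ and the horizontal label $\gamma_l$ and the central label $\delta_k$ enter symmetrically across the four tetrahedra, taking suitable differences of these equations eliminates $\delta_k$ and the $\gamma$'s and isolates the combination $\log\alpha_m-\log\beta_m$ in terms of $\log z_m$ plus a shift. This shift involves $\log\gamma_l$, $\log\delta_k$ and constants $\log$-terms from (\ref{pq}); the content of the observation is precisely that, after the horizontal-edge and central-edge labels are folded in, what survives is $\log z_l$ plus a quantity $A$ that does not vary with $l$. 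I would then propagate this across adjacent crossings using the gluing patterns of Figure \ref{glue pattern}: when a face of octahedron $k$ is glued to a face of octahedron $k+1$, the corresponding non-horizontal edge labels $\alpha,\beta$ are identified (by the consistency hypothesis on the $g_{jk}$), and the side variable $z_l$ labelling that side is shared, so the constant $A$ obtained at crossing $k$ agrees with the one at crossing $k+1$. Since the link diagram is connected, this forces a single global constant $A$.

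The main obstacle I anticipate is bookkeeping: keeping straight the sign $\sigma$ of each tetrahedron (recall ${\rm E}_k{\rm F}_k{\rm C}_k{\rm B}_k$ and ${\rm E}_k{\rm F}_k{\rm A}_k{\rm D}_k$ have $\sigma=-1$, which flips $u\mapsto u^{\sigma}$ and changes which cross-ratio of $g_{jk}$'s appears), and correctly matching the labels $\alpha_a$ versus $\alpha_c$, $\beta_b$ versus $\beta_d$ across the four tetrahedra sharing the octahedron, as spelled out in the paragraph before Figure \ref{labeling}. A secondary subtlety is that the conclusion is only modulo $\pi i$, so I must be careful that all the ambiguities introduced by choices of $\log$-branches in (\ref{pq}) and in the shape-parameter identities are indeed absorbed into that $\pi i$ — but since every relation used is an equality of the underlying complex numbers (ratios of $g_{jk}$'s, shape parameters), the logarithmic versions hold modulo $\pi i$ automatically, and no genuine obstruction arises there. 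Once the per-crossing relation and its compatibility under gluing are in hand, the observation follows immediately.
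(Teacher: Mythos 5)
Your proposal is correct and follows essentially the same route as the paper: apply the $p$-equation of (\ref{pq}) to the tetrahedra of each octahedron to get $\log z_b-\log z_a\equiv(\log\alpha_b-\log\beta_b)-(\log\alpha_a-\log\beta_a)\ (\mathrm{mod}\ \pi i)$ and its analogues, then propagate the constant $A$ over the whole diagram by connectivity and the consistency of the $g_{jk}$'s under gluing. The only simplification you overlook is that the $03$, $12$, $02$, $13$ edges are all non-horizontal, so the cross-ratio $\frac{g_{03}g_{12}}{g_{02}g_{13}}$ never involves $\gamma_l$ or $\delta_k$ and no elimination between the two equations of (\ref{pq}) is needed.
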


\begin{proof} Applying the definition of $p\pi i$ in (\ref{pq}) to the tetrahedra
${\rm E}_k{\rm F}_k{\rm A}_k{\rm B}_k$ and ${\rm E}_k{\rm F}_k{\rm C}_k{\rm B}_k$ in Figure \ref{labeling}, we have
\begin{eqnarray*}
\log z_b-\log z_a&\equiv&(\log\alpha_b-\log\beta_b)-(\log\alpha_a-\log\beta_a)~~({\rm mod}~\pi i),\\
\log z_b-\log z_c&\equiv&(\log\alpha_b-\log\beta_b)-(\log\alpha_c-\log\beta_c)~~({\rm mod}~\pi i).
\end{eqnarray*}
Note that these equations hold for all tetrahedra in the triangulation. 
Therefore, by letting $A=(\log\alpha_a-\log\beta_a)-\log z_a$, we complete the proof.

\end{proof}

Now we consider the three cases in Figure \ref{cases}. 
For $m=a,b,c,d$, let $\sigma_l^m$ be the sign of the tetrahedron between the sides $z_l$ and $z_m$,
and $u_l^m$ be the shape parameter of the tetrahedron assigned to the horizontal edge.
We put $\tau_l^m=1$ when $z_l$ is the numerator of $(u_l^m)^{\sigma_l^m}$ and $\tau_l^m=-1$ otherwise.
We also define $p_l^m$ and $q_l^m$ so that 
$\sigma_l^m[(u_l^m)^{\sigma_l^m};p_l^m,q_l^m]$ becomes the element of 
$\widehat{\mathcal{P}}(\mathbb{C})$ corresponding to the tetrahedron.
By definition, we know
\begin{equation}
u_l^a=\frac{z_l}{z_a},~u_l^b=\frac{z_b}{z_l},~u_l^c=\frac{z_l}{z_c},~u_l^d=\frac{z_d}{z_l}.\label{def_u}
\end{equation}

In the case (a) of Figure \ref{cases}, we have
$$\sigma_l^a=1,~\sigma_l^b=1,~\sigma_l^c=-1,~\sigma_l^d=-1~\text{ and }~
\tau_l^a=1,~\tau_l^b=-1,~\tau_l^c=-1,~\tau_l^d=1.$$
Using the equation (\ref{pq}) and Figure \ref{case a}, we decide $p_l^m$ and $q_l^m$ as follows:

\begin{figure}[h]
\centering
\includegraphics[scale=0.8]{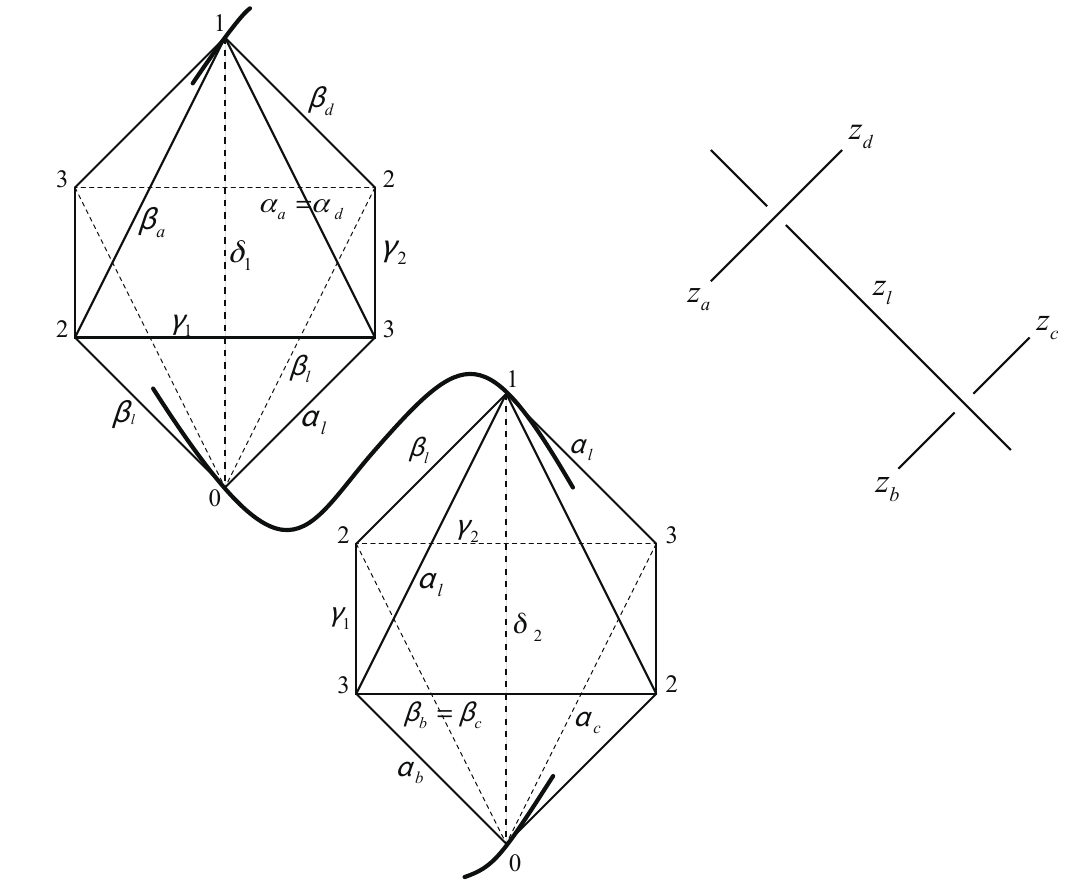}
 \caption{Case (a) of Figure \ref{cases}}\label{case a}
\end{figure}

\begin{eqnarray}
      \left\{\begin{array}{ll}
      \log\frac{z_l}{z_a}+p_l^a\pi i=\log\alpha_l+\log\beta_a-\log\beta_l-\log\alpha_a,\\
      \log\frac{z_b}{z_l}+p_l^b\pi i=\log\alpha_b+\log\beta_l-\log\beta_b-\log\alpha_l,\\
      \log\frac{z_c}{z_l}+p_l^c\pi i=\log\alpha_c+\log\beta_l-\log\beta_c-\log\alpha_l,\\
      \log\frac{z_l}{z_d}+p_l^d\pi i=\log\alpha_l+\log\beta_d-\log\beta_l-\log\alpha_d,
      \end{array}\right.\label{case a p}
\end{eqnarray}

\begin{eqnarray}
      \left\{\begin{array}{ll}
      -\log(1-\frac{z_l}{z_a})+q_l^a\pi i=\log\beta_l+\log\alpha_a-\log\gamma_1-\log\delta_1,\\
      -\log(1-\frac{z_b}{z_l})+q_l^b\pi i=\log\beta_b+\log\alpha_l-\log\gamma_1-\log\delta_2,\\
      -\log(1-\frac{z_c}{z_l})+q_l^c\pi i=\log\beta_c+\log\alpha_l-\log\gamma_2-\log\delta_2,\\
      -\log(1-\frac{z_l}{z_d})+q_l^d\pi i=\log\beta_l+\log\alpha_d-\log\gamma_2-\log\delta_1.
      \end{array}\right.\label{case a q}
\end{eqnarray}

In the case (b) of Figure \ref{cases}, we have
$$\sigma_l^a=-1,~\sigma_l^b=1,~\sigma_l^c=-1,~\sigma_l^d=1~\text{ and }~
\tau_l^a=\tau_l^b=\tau_l^c=\tau_l^d=-1.$$
Using the equation (\ref{pq}) and Figure \ref{case b}, we decide $p_l^m$ and $q_l^m$ as follows:

\begin{figure}[h]
\centering
\includegraphics[scale=0.8]{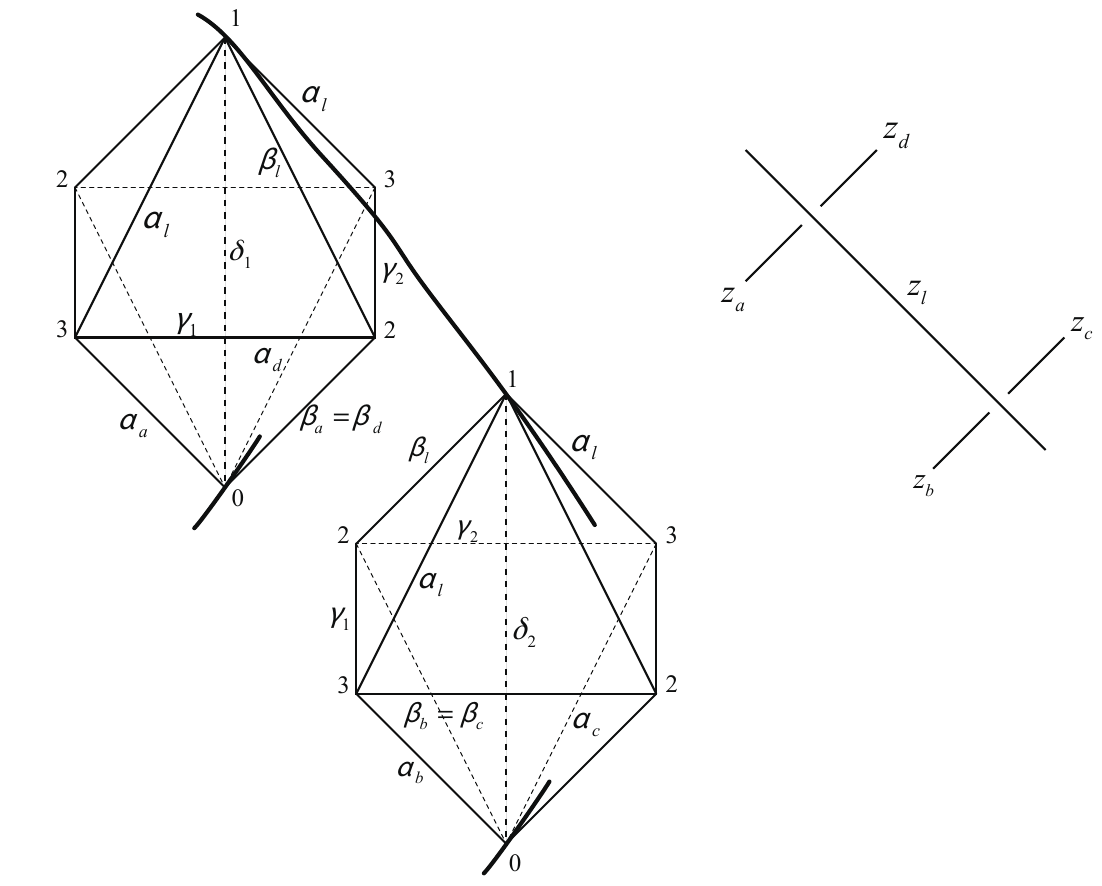}
 \caption{Case (b) of Figure \ref{cases}}\label{case b}
\end{figure}

\begin{eqnarray}
      \left\{\begin{array}{ll}
      \log\frac{z_a}{z_l}+p_l^a\pi i=\log\alpha_a+\log\beta_l-\log\beta_a-\log\alpha_l,\\
      \log\frac{z_b}{z_l}+p_l^b\pi i=\log\alpha_b+\log\beta_l-\log\beta_b-\log\alpha_l,\\
      \log\frac{z_c}{z_l}+p_l^c\pi i=\log\alpha_c+\log\beta_l-\log\beta_c-\log\alpha_l,\\
      \log\frac{z_d}{z_l}+p_l^d\pi i=\log\alpha_d+\log\beta_l-\log\beta_d-\log\alpha_l,
      \end{array}\right.\label{case b p}
\end{eqnarray}

\begin{eqnarray}
      \left\{\begin{array}{ll}
      -\log(1-\frac{z_a}{z_l})+q_l^a\pi i=\log\beta_a+\log\alpha_l-\log\gamma_1-\log\delta_1,\\
      -\log(1-\frac{z_b}{z_l})+q_l^b\pi i=\log\beta_b+\log\alpha_l-\log\gamma_1-\log\delta_2,\\
      -\log(1-\frac{z_c}{z_l})+q_l^c\pi i=\log\beta_c+\log\alpha_l-\log\gamma_2-\log\delta_2,\\
      -\log(1-\frac{z_d}{z_l})+q_l^d\pi i=\log\beta_d+\log\alpha_l-\log\gamma_2-\log\delta_1.
      \end{array}\right.\label{case b q}
\end{eqnarray}

In the case (c) of Figure \ref{cases}, we have
$$\sigma_l^a=1,~\sigma_l^b=-1,~\sigma_l^c=1,~\sigma_l^d=-1~\text{ and }~
\tau_l^a=\tau_l^b=\tau_l^c=\tau_l^d=1.$$
Using the equation (\ref{pq}) and Figure \ref{case c}, we decide $p_l^m$ and $q_l^m$ as follows:

\begin{figure}[h]
\centering
\includegraphics[scale=0.8]{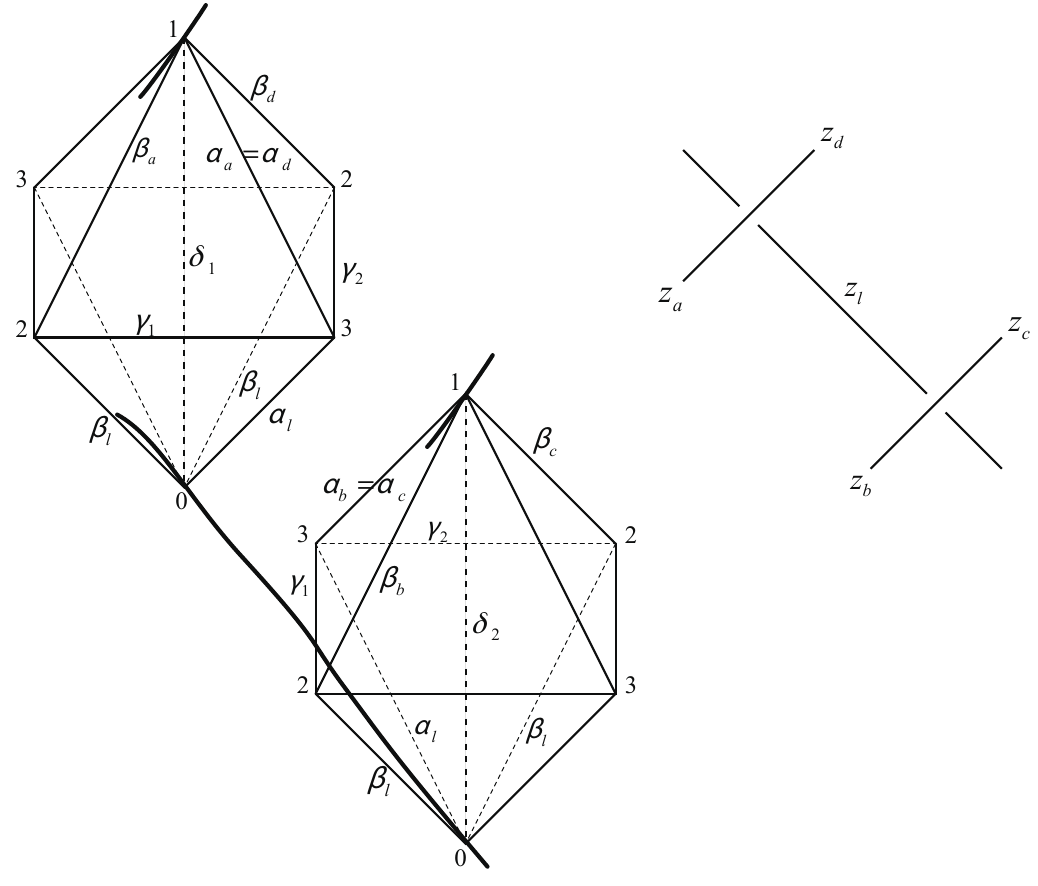}
 \caption{Case (c) of Figure \ref{cases}}\label{case c}
\end{figure}

\begin{eqnarray}
      \left\{\begin{array}{ll}
      \log\frac{z_l}{z_a}+p_l^a\pi i=\log\alpha_l+\log\beta_a-\log\beta_l-\log\alpha_a,\\
      \log\frac{z_l}{z_b}+p_l^b\pi i=\log\alpha_l+\log\beta_b-\log\beta_l-\log\alpha_b,\\
      \log\frac{z_l}{z_c}+p_l^c\pi i=\log\alpha_l+\log\beta_c-\log\beta_l-\log\alpha_c,\\
      \log\frac{z_l}{z_d}+p_l^d\pi i=\log\alpha_l+\log\beta_d-\log\beta_l-\log\alpha_d,
      \end{array}\right.\label{case c p}
\end{eqnarray}

\begin{eqnarray}
      \left\{\begin{array}{ll}
      -\log(1-\frac{z_l}{z_a})+q_l^a\pi i=\log\beta_l+\log\alpha_a-\log\gamma_1-\log\delta_1,\\
      -\log(1-\frac{z_l}{z_b})+q_l^b\pi i=\log\beta_l+\log\alpha_b-\log\gamma_1-\log\delta_2,\\
      -\log(1-\frac{z_l}{z_c})+q_l^c\pi i=\log\beta_l+\log\alpha_c-\log\gamma_2-\log\delta_2,\\
      -\log(1-\frac{z_l}{z_d})+q_l^d\pi i=\log\beta_l+\log\alpha_d-\log\gamma_2-\log\delta_1.
      \end{array}\right.\label{case c q}
\end{eqnarray}

Note that 
\begin{eqnarray*}
\sigma_l^m=\sigma_m^l,~ \tau_l^m=-\tau_m^l,~ u_l^m=u_m^l, ~p_l^m=p_m^l, ~q_l^m=q_m^l\text{ and}\\
\sigma_l^m[(u_l^m)^{\sigma_l^m};p_l^m,q_l^m]=\sigma_m^l[(u_m^l)^{\sigma_m^l};p_m^l,q_m^l]\in\widehat{\mathcal{P}}(\mathbb{C}).
\end{eqnarray*}
If we put the element\footnote{
The element has the coefficient $\frac{1}{2}$ because all tetrahedra appear twice in the summation.}
 $\frac{1}{2}\sum_{l,m}\sigma_l^m[(u_l^m)^{\sigma_l^m};p_l^m,q_l^m]
\in\widehat{\mathcal{P}}(\mathbb{C})$ 
corresponding to the triangulation of $\mathbb{S}^3\backslash (L\cup\{\pm\infty\})$, 
the potential function defined in Section \ref{sec2} can be expressed by the following way:
$$V(z_1,\ldots,z_n)=\frac{1}{2}\sum_{l,m}\sigma_l^m\li\left((u_l^m)^{\sigma_l^m}\right).$$
By direct calculation, we obtain
\begin{equation}\label{partial}
z_l\frac{\partial V}{\partial z_l}=-\sum_{m=a,\ldots,d}\sigma_l^m\tau_l^m\log(1-(u_l^m)^{\sigma_l^m})
\end{equation}
for all $l=1,\ldots,n$.

Recall that we use the notation $r_k$ instead of $r_{j,k}$, which was defined in (\ref{def_r}).

\begin{lem}\label{lem42} 
For all $l=1,\ldots,n$, we have
$$r_l\pi i=-\sum_{m=a,\dots,d}\sigma_l^m \tau_l^m q_l^m\pi i.$$
\end{lem}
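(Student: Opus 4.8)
The plan is to reconcile two different expressions for the same quantity $z_l\frac{\partial V}{\partial z_l}$. On the one hand, by (\ref{partial}) together with (\ref{def_r}),
$$z_l\frac{\partial V}{\partial z_l}=-\sum_{m=a,\dots,d}\sigma_l^m\tau_l^m\log\bigl(1-(u_l^m)^{\sigma_l^m}\bigr)=r_l\pi i .$$
On the other hand, each of the four tetrahedra around the side $z_l$ carries, via Zickert's equation (\ref{pq}), a relation of the form $-\log(1-(u_l^m)^{\sigma_l^m})+q_l^m\pi i=R_l^m$, where $R_l^m$ is the $\pm$-combination of logarithms of the four labels $g_{jk}$ of that tetrahedron; these are precisely the identities collected in (\ref{case a q}), (\ref{case b q}) and (\ref{case c q}) for the three gluing types. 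Multiplying the $m$-th relation by $\sigma_l^m\tau_l^m$, summing over $m=a,b,c,d$, and then inserting the displayed formula above, one obtains
$$r_l\pi i+\sum_{m=a,\dots,d}\sigma_l^m\tau_l^m q_l^m\pi i=\sum_{m=a,\dots,d}\sigma_l^m\tau_l^m R_l^m .$$
Hence Lemma \ref{lem42} is equivalent to the vanishing of the right-hand side, i.e.\ to the identity $\sum_m\sigma_l^m\tau_l^m R_l^m=0$.

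I would then verify this identity by a direct bookkeeping in each of the three cases (a), (b), (c) of Figure \ref{cases}. In all three cases the signs $\sigma_l^m\tau_l^m$ alternate as $(+1,-1,+1,-1)$ along $m=(a,b,c,d)$, so with this weighting the labels $\delta_1,\delta_2$ of the two octahedra at crossings $k,k+1$, the labels $\gamma_1,\gamma_2$ of the two horizontal edges, and the labels $\alpha_l,\beta_l$ of the side $z_l$ itself each occur in exactly two of the $R_l^m$ with opposite weights and cancel. What survives is a short combination of the labels $\alpha_m,\beta_m$ attached to the four outer sides $z_a,z_b,z_c,z_d$, and one checks from the edge identifications of the octahedral triangulation --- the face gluings of Figure \ref{glue pattern} together with the twist inside each octahedron and the edge labellings of Figure \ref{labeling} and Figures \ref{case a}--\ref{case c} --- that this combination telescopes to $0$. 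For the non-alternating gluings (b) and (c) one also uses the completeness conditions in $\mathcal{H}$ that already entered Lemma \ref{lem31} when the cusp pictures of Figures \ref{nonalter1} and \ref{nonalter2} were turned into genuine annuli.

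The main obstacle is this last step: pinning down, in each of the three local configurations, exactly which non-horizontal edge of the two glued octahedra carries each label $\alpha_m,\beta_m$, so that the surviving combination really is a telescoping sum rather than merely a sum that vanishes on the solution set. Once the labels are identified the cancellation is immediate, and the equality $r_l\pi i=-\sum_m\sigma_l^m\tau_l^m q_l^m\pi i$ follows. I would organize the write-up as three parallel short computations, one per case, reusing the cusp diagrams of Section \ref{sec3} to read off the relevant edge identifications.
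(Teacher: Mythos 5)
Your proposal is correct and follows essentially the same route as the paper: combine (\ref{def_r}) and (\ref{partial}) with the $q$-relations (\ref{case a q})--(\ref{case c q}), weight by $\sigma_l^m\tau_l^m=(+1,-1,+1,-1)$, and observe that the $\gamma$, $\delta$, $\alpha_l$, $\beta_l$ terms cancel in pairs while the leftover outer labels vanish by the edge identifications (e.g.\ $\alpha_a=\alpha_d$, $\beta_b=\beta_c$ in case (a)). The only small correction is that no completeness condition is needed in cases (b) and (c) either --- the surviving combination of outer labels cancels purely from the combinatorial requirement that glued edges carry equal $g_{jk}$'s.
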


\begin{proof}
In the case (a) of Figure \ref{cases}, using (\ref{def_r}), (\ref{case a q}), (\ref{partial}), $\alpha_a=\alpha_d$ and $\beta_b=\beta_c$,
 we can directly calculate the following:
\begin{eqnarray}
r_l\pi i&=&z_l\frac{\partial V}{\partial z_l}=-\sum_{m=a,\ldots,d}\sigma_l^m\tau_l^m\log(1-(u_l^m)^{\sigma_l^m})\label{abc}\\
&=&-q_l^a\pi i+q_l^b\pi i-q_l^c\pi i+q_l^d\pi i.\nonumber
\end{eqnarray}
The cases (b) and (c) of Figure \ref{cases} also can be proved by the direct calculation using (\ref{case b q}) and (\ref{case c q}).

\end{proof}

\begin{cor}\label{cor4} 
For all possible $l$ and $m$, we have
\begin{equation}\label{q2pi}
\frac{1}{2}\sum_{l,m}\sigma_l^m q_l^m\pi i\log(u_l^m)^{\sigma_l^m}\equiv-\sum_{l=1}^n r_l\pi i\log z_l~~({\rm mod}~2\pi^2).
\end{equation}
\end{cor}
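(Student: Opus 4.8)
The plan is to expand the left-hand side of (\ref{q2pi}) using the explicit expressions for $q_l^m\pi i$ supplied by equations (\ref{case a q}), (\ref{case b q}), (\ref{case c q}), and then to collect terms by the edge-labels $\alpha,\beta,\gamma,\delta$ rather than by tetrahedra. Each $q_l^m\pi i$ is, up to the $\log(1-(u_l^m)^{\sigma_l^m})$ correction, a signed sum of four logarithms of edge-labels: one of the form $\log\alpha_\bullet$ or $\log\beta_\bullet$, one of the complementary type, one $\log\gamma_i$ and one $\log\delta_i$. When I substitute this into $\frac12\sum_{l,m}\sigma_l^m q_l^m\pi i\log(u_l^m)^{\sigma_l^m}$ and use the symmetry relations $\sigma_l^m=\sigma_m^l$, $u_l^m=u_m^l$, $q_l^m=q_m^l$ recorded just before the statement, each unordered tetrahedron $\{l,m\}$ contributes twice, cancelling the $\frac12$; so it suffices to analyze the ordered sum over crossings and deduce the congruence modulo $2\pi^2$.

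The key mechanism is that the edge-label contributions telescope. First, the $\log(1-(u_l^m)^{\sigma_l^m})$ pieces inside $q_l^m\pi i$, when paired against $\log(u_l^m)^{\sigma_l^m}$ and summed, reassemble (via (\ref{partial}) and Lemma \ref{lem42}) into exactly $\sum_l r_l\pi i\log z_l$ with the correct sign, which is the desired right-hand side. Second, I must show that the remaining contributions — the pure edge-label terms $\bigl(\sum \pm\log(\text{edge})\bigr)\cdot\log(u_l^m)^{\sigma_l^m}$ — vanish modulo $2\pi^2$. For the $\gamma$ and $\delta$ contributions this follows because, around a fixed horizontal edge (an element of $\mathcal A$) or around the edge ${\rm E}_k{\rm F}_k$ (an element of $\mathcal C$), the shape parameters multiply to $1$ by the trivially-satisfied edge conditions noted after the statement of Lemma \ref{lem31}, so $\sum_{\text{tetrahedra at that edge}}\log(u_l^m)^{\sigma_l^m}\in 2\pi i\mathbb Z$, and multiplying $\log\gamma_i$ or $\log\delta_i$ by an element of $2\pi i\mathbb Z$ lands in $2\pi^2\mathbb Z$... wait, that gives $2\pi i\log\gamma_i$, not something in $2\pi^2\mathbb Z$. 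The cleaner route is: $\sum$ of $\log(u_l^m)^{\sigma_l^m}$ over the tetrahedra sharing a $\gamma$- or $\delta$-edge equals the log of the product of shape parameters there, which is the edge equation, hence $\equiv 0\pmod{2\pi i}$ — and then I invoke Observation \ref{obs} together with the $\alpha/\beta$ pairing to convert the leftover $\alpha_l,\beta_l$ terms into $\log z_l$ terms, absorbing the ambiguity into the $\pmod{2\pi^2}$. For the $\alpha$ and $\beta$ contributions, I pair $\log\alpha_l-\log\beta_l$ against the coefficient $\sum_m(\pm)\log(u_l^m)^{\sigma_l^m}$ coming from all tetrahedra incident to the side $z_l$; by Observation \ref{obs} this equals $\log z_l + A\pmod{\pi i}$, the constant $A$ drops out because the coefficients sum appropriately, and what remains is precisely $-\sum_l r_l\pi i\log z_l$ by (\ref{partial}).

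The main obstacle I anticipate is bookkeeping the branch ambiguities consistently: every $\log$ of an edge-label is defined only up to $2\pi i$, and I am multiplying such logs by other logs, so a priori the error is in $2\pi i\cdot(\text{anything})$, not in $2\pi^2\mathbb Z$. The resolution is that the \emph{coefficients} multiplying each ambiguous $\log$ are themselves of the form $(\text{integer})\cdot\pi i$ — this is exactly the content of (\ref{def_r}) and of the $p,q$ being integers in (\ref{pq}) — so each ambiguity contributes $2\pi i\cdot(\text{integer})\pi i = 2\pi^2\cdot(\text{integer}) \in 2\pi^2\mathbb Z$, which is why the congruence is stated modulo $2\pi^2$ and not as an equality. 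Making this step precise for all three gluing cases of Figure \ref{cases} simultaneously — checking that the $\gamma$'s, $\delta$'s, and the constant $A$ all cancel and only the $r_l\pi i\log z_l$ survive — is the one genuinely computational part; I would carry it out by writing the contribution of a single crossing in each case from (\ref{case a p})–(\ref{case c q}), summing over the (at most) eight tetrahedra touching a given side, and checking the cancellation locally.
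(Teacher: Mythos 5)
Your proposal takes a much harder road than necessary, and the hard road has a hole in it. The statement follows almost immediately from two facts that you never combine: $q_l^m$ is an \emph{integer}, and by (\ref{def_u}) each $(u_l^m)^{\sigma_l^m}$ is a ratio of two side variables, so that $\log(u_l^m)^{\sigma_l^m}\equiv \tau_l^m\log z_l+\tau_m^l\log z_m\ ({\rm mod}~2\pi i)$. Multiplying this $2\pi i$-ambiguity by the integral coefficient $q_l^m\pi i$ keeps the error inside $2\pi^2\mathbb{Z}$; each tetrahedron occurs twice in the double sum, which removes the $\frac{1}{2}$; regrouping the resulting sum by the index $l$ and applying Lemma \ref{lem42} gives $-\sum_l r_l\pi i\log z_l$ at once. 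That two-line computation is the paper's entire proof; the edge labels $\alpha,\beta,\gamma,\delta$ never enter.

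The gap in your route is precisely the point you flag mid-argument and then wave away. Once you expand $q_l^m\pi i$ via (\ref{case a q})--(\ref{case c q}) into $\log\bigl(1-(u_l^m)^{\sigma_l^m}\bigr)$ plus signed logs of $\alpha,\beta,\gamma,\delta$, none of the individual summands multiplying $\log(u_l^m)^{\sigma_l^m}$ is an integer multiple of $\pi i$ any more --- only their total is. So your closing principle (``the coefficients multiplying each ambiguous log are of the form $(\text{integer})\cdot\pi i$'') does not apply to the pieces you created: the $\log(1-u^{\sigma})$-piece acquires errors of the form $2\pi i\,\epsilon\log\bigl(1-(u_l^m)^{\sigma_l^m}\bigr)$ and the $\gamma,\delta$-pieces acquire errors of the form $2\pi i\,k\log\gamma_i$, neither of which lies in $2\pi^2\mathbb{Z}$ for a generic solution. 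Consequently neither of your two pieces is separately controlled modulo $2\pi^2$; only the un-split product $q_l^m\pi i\log(u_l^m)^{\sigma_l^m}$ is. Your ``cleaner route'' then pivots to Observation \ref{obs}, but that observation controls only $\log\alpha_l-\log\beta_l$ and says nothing about $\gamma_i$ or $\delta_i$, so the $\gamma,\delta$ cancellation is never actually established. (The edge-label expansion \emph{is} the right tool for the $p$-terms in Lemma \ref{lem44}, but there it works because the defining equations (\ref{case a p})--(\ref{case c p}) are exact identities involving only $\alpha$'s and $\beta$'s, and the final coefficients $-r_l\pi i$ are integer multiples of $\pi i$ with $r_l$ even; none of that structure survives the decomposition you chose here.)
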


\begin{proof} 
Note that $q_l^m$ is an integer. Using (\ref{def_u}) and Lemma \ref{lem42}, we can directly calculate
\begin{eqnarray*}
\frac{1}{2}\sum_{l=1}^n\sum_{m=a,\ldots,d}\sigma_l^m q_l^m\pi i\log (u_l^m)^{\sigma_l^m}
&\equiv&\sum_{l=1}^n\left(\sum_{m=a,\ldots,d}\sigma_l^m\tau_l^m q_l^m\pi i\right)\log z_l~~({\rm mod}~2\pi^2)\\
&=&-\sum_{l=1}^n r_l\pi i\log z_l.
\end{eqnarray*}
\end{proof}

\begin{lem}\label{lem44}
For all possible $l$ and $m$, we have
$$\frac{1}{2}\sum_{l,m}\sigma_l^m\log\left(1-(u_l^m)^{\sigma_l^m}\right)\left(\log(u_l^m)^{\sigma_l^m}+p_l^m\pi i\right)
\equiv-\sum_{l=1}^n r_l\pi i\log z_l~~({\rm mod}~2\pi^2).$$
\end{lem}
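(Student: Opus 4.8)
The plan is to collapse the left-hand side, via one algebraic substitution, to $-\sum_{l}r_l\pi i\log z_l$ plus two error terms, and then to annihilate those errors modulo $2\pi^2$. First I would put $\log(u_l^m)^{\sigma_l^m}+p_l^m\pi i$ into closed form: comparing the two sides of (\ref{case a p}), (\ref{case b p}) and (\ref{case c p}) with the sign tables listed just above them, one reads off that, in all three gluing patterns of Figure \ref{cases} and for every tetrahedron between the sides $z_l$ and $z_m$,
$$\log(u_l^m)^{\sigma_l^m}+p_l^m\pi i=\tau_l^m\bigl((\log\alpha_l-\log\beta_l)-(\log\alpha_m-\log\beta_m)\bigr).$$
(For instance the first line of (\ref{case a p}) is exactly this with $\tau_l^a=1$, the second line with $\tau_l^b=-1$, and so on; the same pattern occurs in cases (b) and (c).)

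Next I would substitute this into the sum. Writing $f_{lm}:=\sigma_l^m\tau_l^m\log\bigl(1-(u_l^m)^{\sigma_l^m}\bigr)$, the relations $\sigma_l^m=\sigma_m^l$, $\tau_l^m=-\tau_m^l$, $u_l^m=u_m^l$ give $f_{ml}=-f_{lm}$, so the left-hand side becomes $\frac{1}{2}\sum_{l,m}f_{lm}\bigl((\log\alpha_l-\log\beta_l)-(\log\alpha_m-\log\beta_m)\bigr)$; antisymmetry of $f$ collapses this to $\sum_{l}(\log\alpha_l-\log\beta_l)\sum_m f_{lm}$. By (\ref{partial}) and (\ref{def_r}), $\sum_m f_{lm}=-z_l\frac{\partial V}{\partial z_l}=-r_l\pi i$, hence the left-hand side equals $-\pi i\sum_{l=1}^{n}r_l(\log\alpha_l-\log\beta_l)$.

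Finally I would invoke Observation \ref{obs}, writing $\log\alpha_l-\log\beta_l=\log z_l+A+\epsilon_l$ with $A$ independent of $l$ and $\epsilon_l\in\pi i\mathbb Z$. This rewrites the left-hand side as $-\sum_l r_l\pi i\log z_l-\pi iA\sum_l r_l-\pi i\sum_l r_l\epsilon_l$. Now $\sum_l r_l=0$, because $V$ is homogeneous of degree zero in $z_1,\dots,z_n$ (each summand is a dilogarithm of a ratio $z_a/z_b$), so $\sum_l z_l\frac{\partial V}{\partial z_l}=\sum_l r_l\pi i=0$; this kills the $A$-term. For the last term, $\mathbf z\in\mathcal S$ forces $\exp(z_l\frac{\partial V}{\partial z_l})=1$, whence $r_l\pi i=z_l\frac{\partial V}{\partial z_l}\in 2\pi i\mathbb Z$, i.e. each $r_l$ is even; writing $\epsilon_l=e_l\pi i$ with $e_l\in\mathbb Z$, the last term equals $\pi^2\sum_l r_l e_l$, an even multiple of $\pi^2$, hence $\equiv 0~({\rm mod}~2\pi^2)$. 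This would prove the congruence.

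The step I expect to be the most error-prone is the uniform closed form for $\log(u_l^m)^{\sigma_l^m}+p_l^m\pi i$, which must be matched against the three gluing patterns and their sign conventions one by one. The conceptually delicate point, however, is getting the statement modulo $2\pi^2$ rather than just modulo $\pi^2$: this rests on the two arithmetic facts $\sum_l r_l=0$ and $r_l\in 2\mathbb Z$, coming respectively from the degree-zero homogeneity of $V$ and from the defining equations $\exp(z_l\frac{\partial V}{\partial z_l})=1$ of $\mathcal S$.
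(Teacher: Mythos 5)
Your proof is correct and follows essentially the same route as the paper: substitute the $p$-equations to write $\log(u_l^m)^{\sigma_l^m}+p_l^m\pi i$ as $\tau_l^m$ times a difference of $\log\alpha-\log\beta$ terms, collapse the double sum via (\ref{partial}) and (\ref{abc}) to $-\sum_l r_l\pi i(\log\alpha_l-\log\beta_l)$, and finish with Observation \ref{obs} together with $\sum_l r_l=0$ and the evenness of $r_l$. Your version merely makes explicit the uniform closed form and the antisymmetry bookkeeping that the paper leaves as ``we can verify,'' and both of your arithmetic justifications (homogeneity of degree zero for $\sum_l r_l=0$, and $\exp(z_l\partial V/\partial z_l)=1$ for $r_l\in 2\mathbb{Z}$) match the paper's.
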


\begin{proof} 
Substituting the term $\left(\log(u_l^m)^{\sigma_l^m}+p_l^m\pi i\right)$ to the summation of $\pm(\log\alpha_k-\log\beta_k)$ terms
by applying (\ref{case a p}) or (\ref{case b p}) or (\ref{case c p}) and (\ref{abc}), we can verify
\begin{eqnarray*}
\lefteqn{\frac{1}{2}\sum_{l,m}\sigma_l^m\log\left(1-(u_l^m)^{\sigma_l^m}\right)\left(\log(u_l^m)^{\sigma_l^m}+p_l^m\pi i\right)}\\
&&=\sum_{l=1}^n\left(\sum_{m=a,\ldots,d}\sigma_l^m\tau_l^m\log(1-(u_l^m)^{\sigma_l^m})\right)(\log\alpha_l-\log\beta_l)\\
&&=-\sum_{l=1}^n r_l\pi i(\log\alpha_l-\log\beta_l).
\end{eqnarray*}
Note that $r_l$ is an even integer and
$$z_j\frac{\partial\li(z_j/z_k)}{\partial z_j}+z_k\frac{\partial\li(z_j/z_k)}{\partial z_k}=
-\log(1-\frac{z_j}{z_k})+\log(1-\frac{z_j}{z_k})=0$$
implies $\displaystyle\sum_{l=1}^n r_{l}\pi i=0.$
By using Observation \ref{obs} and the above property, we have
\begin{equation*}
-\sum_{l=1}^n r_l\pi i(\log\alpha_l-\log\beta_l)\equiv-\sum_{l=1}^n r_l\pi i(\log z_l+A)=-\sum_{l=1}^n r_l\pi i\log z_l~~({\rm mod}~2\pi^2).
\end{equation*}
\end{proof}

Combining (\ref{formula}), Corollary \ref{cor4} and Lemma \ref{lem44}, we prove (\ref{V1}) as follows:
\begin{eqnarray*}
\lefteqn{i(\vol(\rho_{\bold z})+i\,\cs(\rho_{\bold z}))
\equiv\widehat{L}\left(\frac{1}{2}\sum_{l,m}\sigma_l^m[(u_l^m)^{\sigma_l^m};p_l^m,q_l^m]\right)}\\
&&=\frac{1}{2}\sum_{l,m}\sigma_l^m\left(\li\left((u_l^m)^{\sigma_l^m}\right)-\frac{\pi^2}{6}\right)
+\frac{1}{4}\sum_{l,m}\sigma_l^m q_l^m\pi i\log\left(u_l^m\right)^{\sigma_l^m}\\
&&~~+\frac{1}{4}\sum_{l,m}\sigma_l^m\log\left(1-\left(u_l^m\right)^{\sigma_l^m}\right)
\left(\log\left(u_l^m\right)^{\sigma_l^m}+p_l^m\pi i\right)\\
&&\equiv V(z_1,\ldots,z_n)-\sum_{l=1}^n r_l\pi i \log z_l=V_0({\bold z})\modulo.
\end{eqnarray*}

The existence of a solution $\bold{z_{\infty}}$ satisfying $\vol(\rho_{\bold{z_{\infty}}})=\vol(L)$ 
follows from Theorem 1.1 of \cite{Tillmann13}.
Although this theorem was proved for closed manifolds, 
it is still true for our case because Thurston's spinning construction and all the other steps of the proof are valid.
From Thurston-Gromov-Goldman rigidity (Theorem 7.1 in \cite{Francaviglia04}) and (\ref{V1}),
we know $\rho_{\bold{z_{\infty}}}$ is the discrete and faithful representation
and (\ref{V2}) holds.
One minor remark is that Theorem 1.1 of \cite{Tillmann13} 
considered parameter space of Thurston's gluing equations (without completeness condition),
so $\bold{z_{\infty}}$ lies in the parameter space. 
However, because $\rho_{\bold{z_{\infty}}}$ is discrete and faithful,
it is boundary-parabolic and $\bold{z_{\infty}}$ also satisfies the completeness condition.
Therefore $\bold{z_{\infty}}\in \mathcal{S}$ and the path component of $\mathcal{S}$ containing $\bold{z_{\infty}}$ is $\mathcal{S}_0$.

\section{Examples of the twist knots}\label{sec5}

Let $T_n$ ($n\geq 1$) be the twist knot with $n+3$ crossings in Figure \ref{twistknots}. 
For example, $T_1$ is the figure-eight knot $4_1$ and $T_2$ is the $5_2$ knot.
In this section, we show an application of Theorem \ref{thm1} to the twist knot $T_n$ and several numerical results.

\begin{figure}[h]
\centering
\includegraphics[scale=0.9]{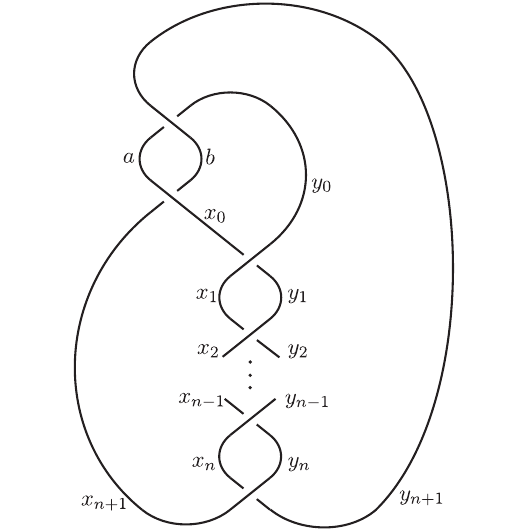}
 \caption{Twist knots $T_n$}\label{twistknots}
\end{figure}

We assign variables $a,b,x_0,\ldots,x_{n+1},y_0,\ldots,y_{n+1}$ to sides of Figure \ref{twistknots}. Then
the potential function becomes
\begin{eqnarray*}
\lefteqn{V(T_n;a,b,x_0,\ldots,x_{n+1},y_0,\ldots,y_{n+1})}\\
&&=\left\{\li(\frac{y_0}{b})-\li(\frac{y_0}{y_{n+1}})+\li(\frac{a}{y_{n+1}})-\li(\frac{a}{b})\right\}\\
&&~~~+\left\{\li(\frac{b}{x_0})-\li(\frac{b}{a})+\li(\frac{x_{n+1}}{a})-\li(\frac{x_{n+1}}{x_0})\right\}\\
&&~~~+\sum_{k=0}^{n}\left\{\li(\frac{y_{k+1}}{x_{k+1}})-\li(\frac{y_{k+1}}{y_k})+\li(\frac{x_k}{y_k})-\li(\frac{x_k}{x_{k+1}})\right\}.
\end{eqnarray*}
We abbreviate the notation of this function to $V(T_n)$.

Finding the whole solution set of the hyperbolicity equations
\begin{equation*}
\mathcal{H}(T_n):=\left\{\left.\exp(z\frac{\partial V(T_n)}{\partial z})=1~\right|~z=a,b,x_0,\ldots,x_{n+1},y_0,\ldots,y_{n+1}\right\}
\end{equation*}
is neither easy nor useful. Instead, we can obtain enough solutions by fixing certain numbers,
say $a=2$, $b=-1$ and $y_{n+1}=1$. Then, from $\exp(a\frac{\partial V(T_n)}{\partial a})=1$, we find $x_{n+1}=3$. 

If we denote $x_0=t$, then we can express all the other variables using $t$ as follows:

from $\exp(b\frac{\partial V(T_n)}{\partial b})=1$, we find $y_0=1+\frac{2}{t}$. 
Also, from $\exp(x_0\frac{\partial V(T_n)}{\partial x_0})=1$ and $\exp(y_0\frac{\partial V(T_n)}{\partial y_0})=1$,
we find $x_1=\frac{t(t+2)}{t^2-4t+8}$ and $y_1=\frac{4}{t}$.

\begin{figure}[h]
\centering
\includegraphics[scale=0.9]{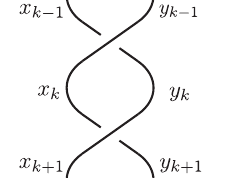}
 \caption{Crossings of the twist knot ($k=1,\ldots,n$)}\label{crossings}
\end{figure}

For $k=1,\ldots,n$, the equations $\exp(x_{k}\frac{\partial V(T_n)}{\partial x_{k}})=1$ 
and $\exp(y_{k}\frac{\partial V(T_n)}{\partial y_{k}})=1$ of Figure \ref{crossings} induce the following recursive formulas
$$x_{k+1}=\frac{x_k y_k}{-x_{k-1}+x_k+y_k},~y_{k+1}=x_k+y_k-\frac{x_k y_k}{y_{k-1}}.$$
They enable us to express all the variables in rational polynomials of $t$. 
Table \ref{table3} shows $x_{k}$ and $y_{k}$ in $t$ for $k=0,\ldots,5$.

\begin{table}[h]
{
\begin{tabular}{|c|c|}\hline
  $x_0$ & $\displaystyle t$ \\\hline
  $y_0$& $\displaystyle \frac{2+t}{t}$\\\hline
  $x_1$& $\displaystyle \frac{2t+t^2}{8-4t+t^2}$ \\\hline
  $y_1$&$\displaystyle  \frac{4}{t}$\\\hline
  $x_2$&$\displaystyle \frac{-4t}{-16+16t-7t^2+t^3}$\\\hline
  $y_2$& $\displaystyle \frac{32-16t+2t^2+t^3}{8t-4t^2+t^3}$\\\hline
  $x_3$&$\displaystyle \frac{32t-16t^2+2t^3+t^4}{128-192t+128t^2-40t^3+5t^4}$\\\hline
  $y_3$& $\displaystyle \frac{-64+64t-24t^2+t^4}{-16t+16t^2-7t^3+t^4}$\\\hline
  $x_4$&$\displaystyle \frac{-64t+64t^2-24t^3+t^5}{-256+512t-464t^2+224t^3-57t^4+6t^5}$\\\hline
  $y_4$& $\displaystyle \frac{512-768t+480t^2-112t^3-6t^4+5t^5}{128t-192t^2+128t^3-40t^4+5t^5}$ \\\hline
  $x_5$&$\displaystyle \frac{512t-768t^2+480t^3-112t^4-6t^5+5t^6}{2048-5120t+5888t^2-3840t^3+1480t^4-316t^5+29t^6}$\\\hline
  $y_5$&$\displaystyle \frac{-1024+2048t-1792t^2+768t^3-124t^4-16t^5+6t^6}{-256t+512t^2-464t^3+224t^4-57t^5+6t^6}$\\\hline
  \end{tabular}\caption{$x_k$ and $y_k $ for $k=0,\dots,6$}\label{table3}}
\end{table}

Furthermore, $\exp(y_{n+1}\frac{\partial V(T_n)}{\partial y_{n+1}})=1$
gives a simple relation
\begin{equation}\label{xn}
y_n=\frac{3t}{3t-4},
\end{equation}
which determines {\it the defining equation of} $t$. 
Table \ref{table1} shows these equations for $n=1,\ldots,5$.
 
\begin{table}[h]
{\begin{tabular}{|c|c|}\hline
  $n$ & Defining equation of $t$\\\hline
  1 & $16-12t+3t^2=0$\\\hline
  2 & $-64+80t-40t^2+7t^3=0$\\\hline
  3 & $256-448t+336t^2-120t^3+17t^4=0$\\\hline
  4 & $-2048+4608t-4608t^2+2464t^3-696t^4+82t^5=0$\\\hline
  5 & $4096-11264t+14080t^2-9984t^3+4192t^4-980t^5+99t^6=0$\\\hline
\end{tabular}\caption{Defining equation (\ref{xn}) of $t$ for $n=1,\dots,5$}\label{table1}}
\end{table}

We checked all the solutions $t$ of the defining equation (\ref{xn}) in Table \ref{table1} satisfy the equations
$\exp(x_n\frac{\partial V(T_n)}{\partial x_n})=1$, $\exp(y_n\frac{\partial V(T_n)}{\partial y_n})=1$ and
$\exp(x_{n+1}\frac{\partial V(T_n)}{\partial x_{n+1}})=1$.\footnote{
As a matter of fact, checking only two of them is enough. This is because,
from the fact $\sum_{z}\frac{\partial V(T_n)(...,z,...)}{\partial z}=0$, one of the equations of $\mathcal{H}(T_n)$ can be deduced from
the others.} 
Therefore, all the solutions $t$ of the defining equation determine the solutions of $\mathcal{H}(T_n)$.
We denote the corresponding representation of $t$ by 
$$\rho(T_n)(t):\pi_1(\mathbb{S}^3\backslash T_n)\longrightarrow{\rm PSL}(2,\mathbb{C}).$$
Then Table \ref{table2} shows the values of $t$ and the corresponding complex volumes of $\rho(T_n)(t)$ for $n=1,\ldots,5$.
Note that, when $n=2$, the values in Table \ref{table2} coincide with the result of the $5_2$ knot in Example 6.16 of \cite{Zickert09}.

\begin{table}[h]
{\begin{tabular}{|c|l|l|}\hline
  $n$ &\hspace{2.2cm} $t$ & $V_0(T_n)(t)\equiv i(\vol(\rho(T_n)(t))+i\,\cs(\rho(T_n)(t)))$\\\hline
  1 & $t=2+1.1547...i$ &\hspace{1cm} $i(2.0299...+0\,i)$\\
    &$t=2-1.1547...i$ &\hspace{1cm} $i(-2.0299...+0\,i)$\\\hline
  2 & $t=1.4587...+1.0682...i$ &\hspace{1cm} $i(2.8281...+3.0241...i)$\\
    & $t=1.4587...-1.0682...i$ &\hspace{1cm} $i(-2.8281...+3.0241...i)$\\
    & $t=2.7969...$ &\hspace{1cm} $i(0-1.1135...i)$\\\hline
  3 & $t=1.2631...+1.0347...i$ &\hspace{1cm} $i(3.1640...+6.7907...i)$\\
    & $t=1.2631...-1.0347...i$ &\hspace{1cm} $i(-3.1640...+6.7907...i)$\\
    & $t=2.2664...+0.7158...i$ &\hspace{1cm} $i(1.4151...+0.2110...i)$\\
    & $t=2.2664...-0.7158...i$ &\hspace{1cm} $i(-1.4151...+0.2110...i)$\\\hline
  4 & $t=1.1713...+1.0202...i$ &\hspace{1cm} $i(3.3317...+10.9583...i)$\\
    & $t=1.1713...-1.0202...i$ &\hspace{1cm} $i(-3.3317...+10.9583...i)$\\
    & $t=1.8097...+0.9073...i$ &\hspace{1cm} $i(2.2140...+1.8198...i)$\\
    & $t=1.8097...-0.9073...i$ &\hspace{1cm} $i(-2.2140...+1.8198...i)$\\
    & $t=2.5257...$ &\hspace{1cm} $i(0-0.8822...i)$\\\hline
  5 & $t=1.1208...+1.0129...i$ &\hspace{1cm} $i(3.4272...+15.3545...i)$\\
    & $t=1.1208...-1.0129...i$ &\hspace{1cm} $i(-3.4272...+15.3545...i)$\\
    & $t=1.5498...+0.9676...i$ &\hspace{1cm} $i(2.6560...+4.6428...i)$\\
    & $t=1.5498...-0.9676...i$ &\hspace{1cm} $i(-2.6560...+4.6428...i)$\\
    & $t=2.2789...+0.4876...i$ &\hspace{1cm} $i(1.1087...-0.2581...i)$\\
    & $t=2.2789...-0.4876...i$ &\hspace{1cm} $i(-1.1087...-0.2581...i)$\\\hline
\end{tabular}\caption{Complex volumes of $\rho(T_n)(t)$ for $n=1,\ldots,5$}\label{table2}}
\end{table}

Note that, from the well-known property (see Proposition 4.8 of \cite{Tillmann13} for example), the value $V_0(T_n)(t)$ with the maximal
imaginary part is the complex volume $i(\vol(T_n)+i\,\cs(T_n))$ of the hyperbolic knot $T_n$.
We placed them at the top in Table \ref{table2}.

We finally remark that the calculation method in this section also works for $n>5$ and
finding complete solutions of $\mathcal{H}(T_n)$ for small $n$ (and probably for all $n$) is possible.
However, all the values of $V_0(T_n)$ evaluated at the complete solutions lie in 
Table \ref{table2} for $n\leq 5$ (and probably do for any $n>5$ too). 
Therefore, our restricted solutions are general enough for calculating complex volumes of twist knots.

\vspace{5mm}
\begin{ack}
  The authors appreciate Yunhi Cho and Jun Murakami for discussions and suggestions on this work.
  The second author was supported by the Korea Research Foundation(KRF)grant
funded by the Korea government(MEST)(No. 2013R1A1A2005861).
\end{ack}

{
\begin{flushleft}
  Department of Mathematics, Korea Institute for Advanced Study (KIAS), Seoul 130-722, Republic of Korea\\
  \vspace{0.4cm}
  Department of Mathematical Sciences, Seoul National University, Seoul 151-747, Republic of Korea\\
    \vspace{0.4cm}
  Center for Geometry and Physics, Institute for Basic Science (IBS), Pohang 790-784, Republic of Korea\\
    \vspace{0.4cm}
E-mail: dol0425@gmail.com\\
\mbox{\phantom{E-mail: }hyukkim@snu.ac.kr}\\
\mbox{\phantom{E-mail: }ryeona17@ibs.re.kr}

\end{flushleft}}
\end{document}